\documentclass[10pt,a4paper]{amsart}

\usepackage[T1]{fontenc}	
\usepackage{graphicx}
\usepackage{microtype}
\usepackage{amsmath, amsthm, amssymb,centernot}
\usepackage{mathrsfs}
\usepackage{pdfsync, comment}
\usepackage[a4paper,top=3cm,bottom=3cm,left=3cm,right=3cm, bindingoffset=0mm]{geometry}

\usepackage[usenames,dvipsnames]{xcolor}
\usepackage[inline,shortlabels]{enumitem}
\usepackage[sort,numbers]{natbib}								
\usepackage[hyphens]{url}									
\usepackage{dsfont}									
\usepackage{stmaryrd}
\usepackage{upgreek}

\usepackage[
						colorlinks,				
						breaklinks,unicode,
						citecolor=OliveGreen,
						linkcolor=Maroon
						]{hyperref} 

\usepackage[font=footnotesize]{caption}
\usepackage[labelformat=simple,font=footnotesize]{subcaption}

\usepackage{mathtools}
\usepackage{scrextend}						
\usepackage{xparse}

\usepackage{tikz}
\usetikzlibrary{calc, positioning, arrows.meta,math,decorations.markings}

\newcommand{\boldnabla}{{\boldsymbol\nabla}}
\newcommand{\boldOmega}{{\boldsymbol\Omega}}

\ExplSyntaxOn
\NewDocumentCommand{\makeabbrev}{mmm}
 {
  \yoruk_makeabbrev:nnn { #1 } { #2 } { #3 }
 }

\cs_new_protected:Npn \yoruk_makeabbrev:nnn #1 #2 #3
 {
  \clist_map_inline:nn { #3 }
   {
    \cs_new_protected:cpn { #2 } { #1 { ##1 } }
   }
 }
 \ExplSyntaxOff

\makeabbrev{\textbf}{tbf#1}{a,b,c,d,e,f,g,h,i,j,k,l,m,n,o,p,q,r,s,t,u,v,w,x,y,z,A,B,C,D,E,F,G,H,I,J,K,L,M,N,O,P,Q,R,S,T,U,V,W,X,Y,Z}

\makeabbrev{\textbf}{bf#1}{a,b,c,d,e,f,g,h,i,j,k,l,m,n,o,p,q,r,s,t,u,v,w,x,y,z,A,B,C,D,E,F,G,H,I,J,K,L,M,N,O,P,Q,R,S,T,U,V,W,X,Y,Z}

\makeabbrev{\textsf}{tsf#1}{a,b,c,d,e,f,g,h,i,j,k,l,m,n,o,p,q,r,s,t,u,v,w,x,y,z,A,B,C,D,E,F,G,H,I,J,K,L,M,N,O,P,Q,R,S,T,U,V,W,X,Y,Z}

\makeabbrev{\mathsf}{mss#1}{a,b,c,d,e,f,g,h,i,j,k,l,m,n,o,p,q,r,s,t,u,v,w,x,y,z,A,B,C,D,E,F,G,H,I,J,K,L,M,N,O,P,Q,R,S,T,U,V,W,X,Y,Z}

\makeabbrev{\mathfrak}{mf#1}{a,b,c,d,e,f,g,h,i,j,k,l,m,n,o,p,q,r,s,t,u,v,w,x,y,z,A,B,C,D,E,F,G,H,I,J,K,L,M,N,O,P,Q,R,S,T,U,V,W,X,Y,Z}

\makeabbrev{\mathrm}{mrm#1}{a,b,c,d,e,f,g,h,i,j,k,l,m,n,o,p,q,r,s,t,u,v,w,x,y,z,A,B,C,D,E,F,G,H,I,J,K,L,M,N,O,P,Q,R,S,T,U,V,W,X,Y,Z}

\makeabbrev{\mathbf}{mbf#1}{a,b,c,d,e,f,g,h,i,j,k,l,m,n,o,p,q,r,s,t,u,v,w,x,y,z,A,B,C,D,E,F,G,H,I,J,K,L,M,N,O,P,Q,R,S,T,U,V,W,X,Y,Z}

\makeabbrev{\mathcal}{mc#1}{A,B,C,D,E,F,G,H,I,J,K,L,M,N,O,P,Q,R,S,T,U,V,W,X,Y,Z}

\makeabbrev{\mathbb}{mbb#1}{A,B,C,D,E,F,G,H,I,J,K,L,M,N,O,P,Q,R,S,T,U,V,W,X,Y,Z}

\makeabbrev{\mathscr}{ms#1}{A,B,C,D,E,F,G,H,I,J,K,L,M,N,O,P,Q,R,S,T,U,V,W,X,Y,Z}

\makeabbrev{\mathrm}{#1}{
Id,id,ran,rk,diag,stab,ann,conv,pr,ev,tr,End,Hom,sgn,im,op,can,fin,ext,red,tot,step,
rot,usc,lsc,Lip,LocLip,lip,bSymLip,osc,AC,loc,spec,coz,z,Hess,
supp,Opt,Adm,Cpl,Geo,geo,OptGeo,GeoAdm,GeoCpl,GeoSel,reg,
bd,co,Ric,Exp,dExp,dist,seg,Seg,cut,fcut,Cut,SDiff,Iso,Isom,diam,cl,Homeo,Diff,Der,der,grad,vol,dvol,inj,relint, Graph, sub,
var,law,Var,Poi,Gam,pa,so,iso,fs,inv,pqi,mix,
TestF,
}

\makeabbrev{\mathsf}{#1}{DP,CD,QCD,RQCD,BE,MCP,Ent,wMTW,MTW,RCD,EVI,Irr,IH,SC,wFe,UP}

\makeabbrev{\mathsc}{#1}{mmaf,cg,cc,lc}

\newcommand{\pbracket}[2]{\left\langle#1,#2\right\rangle}
\newcommand{\bbracket}[2]{\left\llbracket#1,#2\right\rrbracket}

\newcommand{\eps}{\varepsilon}
\renewcommand{\div}{\mathrm{div}}

\newcommand{\mathsc}[1]{\text{\textsc{#1}}}
\newcommand{\emparg}{{\,\cdot\,}}

\DeclareMathOperator{\eqdef}{\coloneqq}

\let\epsilon\varepsilon
\newcommand{\longrar}{\longrightarrow}

\newcommand{\diff}{\mathop{}\!\mathrm{d}}						
\newcommand{\ttabs}[1]{\lvert#1\rvert}	
	
\newcommand{\ttnorm}[1]{\lVert#1\rVert}
\newcommand{\norm}[1]{\left\lVert#1\right\rVert}					
\newcommand{\set}[1]{\left\{#1\right\}}							
\newcommand{\paren}[1]{\left(#1\right)}							
\newcommand{\tparen}[1]{\big({#1}\big)}
\newcommand{\bracket}[1]{\left[#1\right]}							

\newcommand{\class}[2][]{\left[#2\right]_{#1}}						
\newcommand{\tclass}[2][]{\big [#2\big]_{#1}}						
\newcommand{\ttclass}[2][]{[#2]_{#1}}							

\newcommand{\ttscalar}[2]{\langle #1 \, |\, #2\rangle}			
\newcommand{\scalar}[2]{\left\langle #1 \,\middle |\, #2\right\rangle}		
\newcommand{\sym}[1]{{\scriptscriptstyle{(#1)}}}
\newcommand{\tym}[1]{{\scriptscriptstyle{\times #1}}}
\newcommand{\otym}[1]{{\scriptscriptstyle{\otimes #1}}}

\newcommand{\tperp}{{\scriptscriptstyle{\perp}}}

\newcommand{\SFF}{\mathrm{I\!I}}

\DeclareSymbolFont{symbolsC}{U}{pxsyc}{m}{n}
\SetSymbolFont{symbolsC}{bold}{U}{pxsyc}{bx}{n}
\DeclareFontSubstitution{U}{pxsyc}{m}{n}
\DeclareMathSymbol{\medcirc}{\mathbin}{symbolsC}{7}
\DeclareSymbolFont{symbolsZ}{OMS}{pxsy}{m}{n}
\SetSymbolFont{symbolsZ}{bold}{OMS}{pxsy}{bx}{n}
\DeclareFontSubstitution{OMS}{pxsy}{m}{n}

\newcommand{\seq}[1]{\paren{#1}}								

\newcommand{\Cb}{\msC_b}									
\newcommand{\Cc}{\msC_c}									
\newcommand{\Cinfty}{{\msC^{\infty}}}						

\newcommand{\pfwd}{\sharp}

\DeclareMathOperator{\car}{\mathds 1}
 
\newcommand{\N}{{\mathbb N}}
\newcommand{\R}{{\mathbb R}}

\newcommand{\restr}{\big\lvert}

\newcommand{\comma}{\,\,\mathrm{,}\;\,}

\newcommand{\fstop}{\,\,\mathrm{.}}

\newcommand{\intr}{{\mathrm{int}}}

\newcommand{\conn}{\boldnabla}

\newcommand{\FC}[2]{\msF\msC^{#1}_{#2}}
\newcommand{\VC}[2]{\msV\msC^{#1}_{#2}}
\newcommand{\OC}[3]{\Omega_{#1}\msC^{#2}_{#3}}

\newcommand{\trid}{\star}

\allowdisplaybreaks

\numberwithin{equation}{section}

\theoremstyle{plain}
\newtheorem{theorem}{Theorem}[section]
\newtheorem*{theorem*}{Theorem}
\newtheorem*{mtheorem*}{Main Theorem}
\newtheorem{proposition}[theorem]{Proposition}
\newtheorem*{prop*}{Proposition}
\newtheorem{lemma}[theorem]{Lemma}
\newtheorem{corollary}[theorem]{Corollary}

\theoremstyle{definition}
\newtheorem{definition}[theorem]{Definition}
\newtheorem*{defs*}{Definition}

\theoremstyle{remark}
\newtheorem{remark}[theorem]{Remark}
\newtheorem*{rem*}{Remark}
\newtheorem*{quest*}{Question}

\renewcommand{\paragraph}[1]{\medskip{\noindent\emph{#1}.\quad}}

\makeatletter   
\def\@fnsymbol#1{\ensuremath{\ifcase#1\or *\or \mathsection \or  \mathparagraph \or \dagger\or
    \ddagger \or \|\or **\or \dagger\dagger
   \or \ddagger\ddagger \else\@ctrerr\fi}}
 \makeatother 

\makeatletter
\newcommand\thankssymb[1]{\textsuperscript{\@fnsymbol{#1}}}
\makeatother

\makeatother

\makeatletter
\def\l@subsection{\@tocline{2}{0pt}{2.5pc}{5pc}{}}
\makeatother

\begin{document}
\title[A `global' Perspective on Wasserstein Differential Geometry]{A `global' Perspective on\\ the Differential Geometry of Wasserstein Spaces\thankssymb{5}}

\author[L.~Dello Schiavo]{Lorenzo Dello Schiavo\thankssymb{1}}
\address{Università degli Studi di Roma ``Tor Vergata'' --- Dipartimento di Matematica, Viale della Ricerca Scientifica 1, 00133 Rome, Italy}
\email{delloschiavo@mat.uniroma2.it}
\thanks{\thankssymb{1}Università degli Studi di Roma ``Tor Vergata'', e-mail: delloschiavo@mat.uniroma2.it. Orcid: 0000-0002-9881-6870}

\author[A.~Pinamonti]{Andrea Pinamonti\thankssymb{2}}
\address{Universit\`a di Trento --- Dipartimento di Matematica. Via Sommarive, 14 - 38123 Povo, Italy}
\email{andrea.pinamonti@unitn.it}
\thanks{\thankssymb{2}%
Università di Trento, e-mail: andrea.pinamonti@unitn.it. Orcid: 0000-0003-1000-8607}
\thanks{\thankssymb{5}%
This project was initiated during a visit of LDS at the University of Trento, sponsored by a Research in Pairs project of FBK-CIRM (Trento), whose excellent working conditions are warmly acknowledged.
LDS gratefully acknowledges funding from the Austrian Science Fund (FWF) through project \href{https://doi.org/10.55776/ESP208}{10.55776/ESP208}, and from the PRIN Department of Excellence MatMod@TOV (CUP: E83C23000330006). AP was partially supported by the GNAMPA project \emph{Variational, Geometric, and Analytic Perspectives on Regularity} (CUP: E53C25002010001); further support was provided by MUR and by the University of Trento through the PRIN project \emph{Regularity problems in Sub-Riemannian structures} (MUR-PRIN 2022, Project code: 2022F4F2LH). LDS is also grateful to Masha Gordina and Karl-Theodor Sturm for useful conversations on the geometry of Wasserstein spaces.}

\begin{abstract}
We develop a global differential calculus on the $L^2$-Wasserstein space over a closed Riemannian manifold, based on derivations of cylinder functions rather than on the standard pointwise approach. 
Within this framework we define some fundamental geometric tools.
In particular, we show that the Levi-Civita connection on the base manifold lifts to the unique torsion-free connection compatible with the `extended' Otto metric. The corresponding Riemann tensor is exactly the lift of the base Riemann tensor, which shows that ---~in this framework~--- the correction terms of the classical gradient formalism are not intrinsic curvature terms, but arise from the projection onto the measure-dependent gradient distribution.
This allows us to revisit with a global and purely differential approach the smooth computations by  J.~Lott, \emph{Comm.\ Math.\ Phys.}, 277(2):423–437, 2007, reaching partially different conclusions.
\end{abstract}
\subjclass[2020]{Primary: 58B20; Secondary: 53C20}

\keywords{Wasserstein space; Levi-Civita connection; Lie brackets}

\maketitle

\date{\today}
\maketitle

\section{Introduction}
Let~$\msP$ be the space of probability measures on a closed Riemannian manifold~$M$, endowed with the $L^2$-Kantorovich--Rubinstein distance~\eqref{eq:KR}.
The resulting metric space~$\msP_2$ is called the Wasserstein space over~$M$.
Since the foundational works~\cite{Bre91,BenBre00,McC97,Ott01,McC01}, and many others, suitable infinite-dimensional analogues on~$\msP_2$ of standard geometric objects have been introduced, including: \emph{tangent spaces} and a \emph{metric tensor}~\cite{Ott01}, the \emph{Levi-Civita connection}~\cite{Lot07}, the \emph{exponential map}~\cite{Gig11}, etc. 
Indeed, it is often said that~$\msP_2$ admits a formal structure of `infinite-dimensional Riemannian manifold' inherited from the underlying manifold~$M$.

In this paper we offer a `global' perspective on smooth computations on $L^2$-Wasserstein spaces which is in a sense alternative to the seminal one by J.~Lott,~\cite{Lot07}, and reaches partially different conclusions.
For this reason, before stating our results, we first provide a heuristic justification of our approach.

\subsection{Heuristics}
On a differential manifold~$M$, (smooth, tangent) vector fields can be defined in several equivalent ways.
In a `pointwise' fashion, the tangent space at each point is identified with the linear space of derivations acting on germs of smooth functions at that point; a vector field is then a section of the tangent bundle.
In a `global' fashion, a vector field is a derivation acting on the space of smooth functions.
Both are equivalent to a `local' approach, defining vector fields in charts: since derivations form a sheaf, the locality property (relating to the notion of germ) gives compatibility between the pointwise definition and the local one, while the gluing property (relating to the notion of chart) gives compatibility between the local definition and the global one.

It was noted in~\cite[\S6.1]{LzDS19b} that here the formal analogy between Wasserstein spaces and Riemannian manifolds breaks down, and this globalization phenomenon does not hold.
Indeed, consider as a natural class of `smooth' functions on~$\msP$ the algebra~$\FC{\infty}{b}$ of \emph{cylinder functions} induced by smooth potential energies, see~\eqref{eq:FC}.
While the usual tangent space to each point~$\mu$ in~$\msP$ is (the $L^2_\mu$-closure of) the space~$\mfX^\infty_\grad$ of smooth \emph{gradient} vector fields on~$M$, the space of derivations acting on~$\FC{\infty}{b}$ contains the space~$\mfX^\infty$ of \emph{all} smooth vector fields on~$M$, and in particular vector fields which are not the gradient of any function.
Thus, the `pointwise' perspective ---the mainstream one in optimal transportation--- and the `global' perspective ---which has occasionally transpired for~$\msP$, e.g.~\cite{GanKimPac10,LzDS19b}, and is the standard one on configuration spaces, e.g.~\cite{AlbKonRoe98,RoeSch99}--- diverge:
They cannot be bridged in view of the known lack of local charts on~$\msP$.

Here, we take this `global' point of view and we introduce vector fields directly as a module of derivations over~$\FC{\infty}{b}$.
(Incidentally, this is the modern approach also in metric measure geometry; cf., e.g.,~\cite{Gig18}.)
A conceptual strength of our approach is that it neatly separates \emph{differential} objects from \emph{Riemannian} ones, as we now explain.
A natural \emph{differential} structure on~$\msP$ (vector fields, Lie bracket, etc.) ought to be inherited from the differential structure of~$M$ alone, independently of any reference Riemannian metric~$g$ on~$M$.
This is indeed the case for the differential calculus we propose here, since we only rely on~$\mfX^\infty$.
By contrast, any differential calculus on~$\msP$ based on~$\mfX^\infty_\grad$ ---as in the `pointwise' approach~\cite{Gig12,Lot07,DinFan21,dSGMagRodSMa24}--- \emph{depends} on the Riemannian metric~$g$, since so does~$\mfX^\infty_\grad$.
Indeed, it follows from~\cite[Thm.~22]{LesSch20} that the standard differential structure of~$\msP$ is not even invariant under a \emph{conformal} change of the reference metric~$g$ on~$M$.

\smallskip

In order to use vector fields induced by~$\mfX^\infty$, we consider the natural extension~$\mbfG^\der$ of Otto's metric tensor~$\mbfG^\grad$ to the \emph{pseudo}-tangent space~$T^\der_\mu\msP$~\cite{GanKimPac10}, into which the usual tangent space~$T^\grad_\mu\msP$ naturally embeds.
(Here and elsewhere, the superscript `der' stands for `derivation'.)
This prompts us to define differential operators and tensors which naturally extend those considered in~\cite{Lot07} to the pseudo-tangent bundle.

\subsection{Main results}
Let~$\msP^\infty$ be the subspace of all measures in~$\msP$ with smooth density with respect to the Riemannian volume bounded away from zero (and infinity).
Similarly to~\cite{Lot07}, we compute the Lie bracket, the Levi-Civita connection, and the Riemann tensor of~$\msP$.
As a first important distinction, while~\cite{Lot07} performs formal computations only at points~$\mu$ in~$\msP^\infty$, we perform rigorous computations at \emph{every} $\mu$ in~$\msP$.
In contrast to~\cite{Lot07} (and to its recent reinterpretations~\cite{DinFan21,dSGMagRodSMa24}), the most of our computations is purely algebraic in nature, and independent of the regularity of the application point~$\mu$.

As a second crucial distinction, \cite{Lot07} performs computations only on \emph{constant} vector fields.
(This restriction appears already in the operative definition of bracket of vector fields~\cite[Eqn.~(4.1)]{Lot07}.)
Here, we perform computations on a larger linear space~$\VC{\infty}{b}\eqdef \FC{\infty}{b}\otimes \mfX^\infty$ of `smooth' \emph{cylinder vector fields}.
This choice is both relevant and necessary to our `global' understanding, for it allows us to verify whether operators on vector fields are \emph{tensorial}\footnote{Recall that an operator acting on a vector bundle on~$M$ is \emph{tensorial} if and only if it is $\Cinfty$-linear ---~thus, in particular invariant under change of coordinates.
Since no differential charts exist on~$\msP$ in the usual sense, we say that an operator acting on~$\VC{\infty}{b}$ is tensorial if it is $\FC{\infty}{b}$-linear.
Again, this fits our `global' point of view.
It is not possible to verify whether an operator acting \emph{only} on constant vector fields is tensorial, since the product of a `smooth' function in $\FC{\infty}{b}$ with a constant vector field is not a constant vector field, and therefore the operator is not defined on it.
}, which is not possible when working only with constant vector fields.

\paragraph{Lie brackets}
We revisit the Lie bracket in~\cite{Lot07} and prove that ---beyond being a commutator--- it makes~$\VC{\infty}{b}$ into a Lie algebroid with identical anchor on~$\FC{\infty}{b}$ (Cor.~\ref{c:LieAlgebroid}), in complete analogy with the case of manifolds.

\paragraph{Connections}
In~\S\ref{ss:IntrinsicConnection} we define an \emph{intrinsic} affine connection~$\conn^\intr$ on~$\msP$, acting on~$\VC{\infty}{b}$, which is \emph{independent} of any holonomy on~$M$.
Furthermore, we show (see Thm.~\ref{t:LCConnection} and Fig.~\ref{fig:Torsion})

\begin{theorem}\label{t:MainIntro} The following assertions hold:
\begin{itemize}[wide]
\item each affine connection~$\nabla$ on~$M$ lifts to an $\End(\VC{\infty}{b})$-valued $1$-form~$\boldOmega^\nabla$;

\item $\conn^\nabla \eqdef \conn^\intr + \boldOmega^\nabla$ is an affine connection acting on~$\VC{\infty}{b}$;

\item the association~$\nabla\mapsto \conn^\nabla$ preserves metric compatibility and torsion-freeness;

\item the connection~$\conn^{\nabla^\lc}$ induced on~$\msP$ by the Levi--Civita connection~$\nabla^\lc$ on~$M$ is the (unique) $\mbfG^\der$-metric-compatible torsion-free connection on~$\msP$, i.e., it is the Levi-Civita connection of $(\VC{\infty}{b},\mbfG^\der)$.
\end{itemize}
\end{theorem}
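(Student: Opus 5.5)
The plan is to set up the objects concretely and then reduce each claim to a finite computation on generators. A cylinder function is $u=F\circ(f_1^\star,\dots,f_k^\star)$ with $f^\star\mu=\int f\diff\mu$, so its differential is
\[
\diff u(\mu)=\sum_i \partial_iF\,\diff f_i .
\]
A cylinder vector field $\mbfW=\sum_j w_j\otimes X_j$, with $w_j\in\FC{\infty}{b}$ and $X_j\in\mfX^\infty$, acts by
\[
(\mbfW u)(\mu)=\sum_{i,j} w_j(\mu)\,\partial_iF\int \diff f_i(X_j)\diff\mu .
\]
The extended metric is $\mbfG^\der_\mu(X,Y)=\int g(X,Y)\diff\mu$ on constant fields, extended $\FC{\infty}{b}$-bilinearly.

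I take the intrinsic connection $\conn^\intr$ of \S\ref{ss:IntrinsicConnection} to be the flat one that differentiates only the coefficients: $\conn^\intr_{\mbfV}(w\otimes X)=(\mbfV w)\otimes X$. Given an affine connection $\nabla$ on $M$, I define its lift by
\[
\boldOmega^\nabla_{v\otimes Y}(w\otimes X)\eqdef v\,w\otimes \nabla_YX ,
\]
extended bilinearly, where $\nabla_YX\in\mfX^\infty$. The first point is then that this is well defined on the tensor product and $\FC{\infty}{b}$-linear in both slots, so it is an $\End(\VC{\infty}{b})$-valued $1$-form. The only subtlety is that $\VC{\infty}{b}$ is a tensor product over $\mbb R$, so well-definedness has to be checked on balanced tensors; this is immediate because the formula is $\mbb R$-bilinear in $(w,X)$.

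For the second point, $\conn^\intr$ satisfies the Leibniz rule $\conn^\intr_{\mbfV}(u\mbfW)=(\mbfV u)\mbfW+u\conn^\intr_{\mbfV}\mbfW$ and is $\FC{\infty}{b}$-linear in $\mbfV$. Adding a tensorial $1$-form preserves both properties, so $\conn^\nabla$ is an affine connection.

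For the third point, I compute the torsion. For constant fields, $\conn^\intr$ vanishes, and the Lie bracket (Cor.~\ref{c:LieAlgebroid}) restricted to constant fields is the lift of the bracket on $M$. I expect this from the commutator: $XYf^\star=(XYf)^\star$, so $[X,Y]f^\star=([X,Y]f)^\star$. On general cylinder fields, the Leibniz rules of the bracket and of $\conn^\nabla$ make the terms involving derivatives of the coefficients cancel, exactly as on manifolds. Hence
\[
T^{\conn^\nabla}(v\otimes Y,w\otimes X)=vw\otimes T^\nabla(Y,X),
\]
and torsion-freeness is inherited. Metric compatibility works the same way. Since $\mbfG^\der(X,Z)=(g(X,Z))^\star$, we have
\[
Y\bigl(\mbfG^\der(X,Z)\bigr)=\int Y g(X,Z)\diff\mu=\int\bigl[g(\nabla_YX,Z)+g(X,\nabla_YZ)\bigr]\diff\mu
\]
when $\nabla$ is $g$-compatible. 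The coefficient terms are again handled by Leibniz, using that $\mbfG^\der$ is $\FC{\infty}{b}$-bilinear.

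For the fourth point, existence follows from applying the third point to $\nabla^\lc$. For uniqueness I use the Koszul formula. Any $\mbfG^\der$-compatible torsion-free connection satisfies the Koszul identity, which expresses $2\mbfG^\der(\conn_{\mbfX}\mbfY,\mbfZ)$ through derivatives of metric coefficients and brackets. Two such connections therefore have a difference $\mbfD_{\mbfX}\mbfY$ with $\mbfG^\der_\mu(\mbfD_{\mbfX}\mbfY,\mbfZ)=0$ for all $\mbfZ\in\VC{\infty}{b}$ and all $\mu$.

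This nondegeneracy step is the main obstacle, because $\mbfG^\der_\mu$ is only a semi-inner product on $\VC{\infty}{b}$: fields vanishing $\mu$-a.e.\ are null. So uniqueness must be understood modulo $\mbfG^\der$-null fields, that is, pointwise in $L^2_\mu(TM)$. My approach is to take $\mbfZ=\mbfD_{\mbfX}\mbfY$, which lies in $\VC{\infty}{b}$ because $\mbfD$ is tensorial. This gives $\norm{\mbfD_{\mbfX}\mbfY}_{L^2_\mu}=0$ at every $\mu$. Testing at Dirac masses $\mu=\delta_x$, the field vanishes at every $x$, so the difference is identically zero as an element of $\VC{\infty}{b}$ evaluated pointwise. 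If $\VC{\infty}{b}$ is identified with its image in sections of $\mu\mapsto L^2_\mu$, this yields genuine uniqueness. Verifying the Koszul identity itself requires that brackets and derivatives of $\mbfG^\der$ be expressed in $\VC{\infty}{b}$ form, which follows from the computations above.
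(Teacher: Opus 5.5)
Apart from one step, your proposal follows the paper's proof. You use the same lift $\boldOmega^\nabla$, which is tensorial because cylinder functions act as constants on $M$. You make the same observation that adding a tensorial form to $\conn^\intr$ preserves the product rule. You reduce torsion and metric compatibility to constant fields, where $\conn^\intr$ vanishes and $\bbracket{\car\otimes Y}{\car\otimes X}=\car\otimes[Y,X]$. Your uniqueness argument via the Koszul formula is what the paper gets by invoking Lee's algebraic proof. The paper instead evaluates directly on elementary tensors via the explicit bracket formula and $\mbfT^\intr=-\pbracket{\emparg}{\emparg}$, rather than first proving tensoriality. That is the same computation organized differently.

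The step that fails is your justification of non-degeneracy. Write $\mbfE$ for your difference field $\mbfD_{\mbfX}\mbfY$, to avoid clashing with the paper's differential $\mbfD$. From $\mbfG^\der(\mbfE,\mbfE)\equiv0$ you get $\mbfE_\mu=0$ on $\supp\mu$ for every $\mu$. At $\mu=\delta_x$ this says only that the vector field $\mbfE_{\delta_x}$ vanishes at the single point $x$. It does not force $\mbfE_{\delta_x}\equiv 0$ on $M$, and says nothing about $\mbfE_\mu$ for other $\mu$. Concretely, $\mbfE\eqdef f^\trid\otimes X-\car\otimes fX$ satisfies $(\mbfE_{\delta_x})_x=(f(x)-f(x))X_x=0$ for every $x$. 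Yet $\mbfE\neq0$ whenever $f$ is non-constant and $X\neq 0$, so the Dirac test cannot distinguish it from zero.

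The fix is to test at measures of full support instead:
\begin{itemize}
\item If $\mu$ has full support, $\mbfE_\mu$ is a smooth field vanishing $\mu$-a.e., hence everywhere on $M$.
\item Write $\mbfE=\sum_j d_j\otimes X_j$ with the $X_j$ linearly independent in $\mfX^\infty$. Set $\mu_t\eqdef(1-t)\nu+t\vol_g$ for $t\in(0,1]$; these have full support, so $d_j(\mu_t)=0$.
\item Since each $f^\trid$ is affine in $\mu$, the map $t\mapsto d_j(\mu_t)$ is continuous. Letting $t\to0$ gives $d_j(\nu)=0$ for every $\nu\in\msP$, i.e.\ $\mbfE=0$ in $\VC{\infty}{b}$.
\end{itemize}
This is precisely the non-degeneracy of $\mbfG^\der$ on $\VC{\infty}{b}$ that the paper asserts without proof. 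It also makes your caveat that uniqueness holds only ``modulo $\mbfG^\der$-null fields'' unnecessary.
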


Notably, the intrinsic connection~$\conn^\intr$ identically \emph{vanishes} on constant vector fields, and therefore it does not show up in the computations in~\cite{Lot07}, which effectively deals with~$\boldOmega^\nabla$.
In our language, the latter is however \emph{not} a connection, since it is tensorial (being a differential form) and therefore it does not satisfy the product rule~\eqref{eq:p:Connection:0}.
On the contrary, we verify the product rule for both~$\conn^\intr$ and~$\conn^\nabla$ for every connection~$\nabla$ on~$M$.

\paragraph{Curvature}
Again with our coordinate-free algebraic approach, we compute the Riemann curvature tensor acting on~$\VC{\infty}{b}$, and show that it is simply the lift of the Riemann curvature tensor on~$M$, \emph{without} the additional correction terms appearing in~\cite{Lot07}; see Thm.~\ref{t:RiemannTensor}.
We further prove all natural symmetries and Bianchi identities for the Riemann tensor; see Prop.~\ref{p:RiemannIdentities}. 
The absence of correction terms in our curvature formula, cf.~\eqref{eq:t:RiemannTensor:0},
should not be regarded as a merely computational simplification.
Rather, it allows to identify the source of the correction terms appearing in the usual Wasserstein calculus.
In the global differential calculus developed here,
the curvature of the Wasserstein space is exactly the lift of the curvature of the underlying manifold.
Thus the Riemann tensor does not depend on the regularity of the base measure, nor on the discontinuous variation of the projection $\Pi_\mu\colon T^\der_\mu\msP\to T^\grad_\mu\msP$.

The additional terms found in the pointwise/gradient formalism arise only
after restricting the global calculus to the gradient distribution~$T^\grad\msP$.
From our perspective they are instead extrinsic terms,
measuring the failure of the gradient distribution to be preserved by the
global Levi-Civita connection and, equivalently, its non-integrability
inside the larger derivation bundle~$T^\der\msP$. This distinction
is one of the main conceptual points of the paper: the global curvature is
intrinsic, whereas the correction terms belong to the
reduced geometry obtained after projection onto~$T^\grad\msP$.

\subsection{Comparison with \cite{DinFan21,dSGMagRodSMa24}}
It will be apparent in the following that some of the expressions we compute for operators acting on $\VC{\infty}{b}$ are simpler than the corresponding expressions in the literature for operators acting on sections of the usual tangent bundle~$T^\grad\msP$.
In particular, under our `global' point of view, the bracket of constant vector fields is again a constant vector field in the pseudo-tangent bundle~$T^\der\msP$, which is \emph{not} true in the `pointwise' approach for the bracket of two gradient-type vector fields regarded as an element of~$T^\grad\msP$.
(Why the latter is not a constant vector field on~$\msP$ is carefully explained in~\cite[\S3]{DinFan21}.)

This is no contradiction, since the `global' bracket we consider is an extension to~$T^\der\msP$ of the `pointwise' bracket in~$T^\grad\msP$: while the `global' bracket (of constant vector fields) remains constant, the `pointwise' bracket varies (in fact, \emph{not} in a continuous way) as a function of~$\mu$, simply being the $L^2_\mu$-orthogonal projection of the `global' one onto~$T^\grad_\mu\msP=(\ker\div_\mu)^\tperp$. (See Fig.~\ref{fig:Lift} for a pictorial representation.)
In particular, while in order to \emph{define} the `pointwise' bracket at~$\mu$ it is necessary for~$\mu$ to induce a proper divergence, (see~\cite[\S3 and Dfn.~1.2]{DinFan21}) the `global' bracket is naturally defined at \emph{every} point~$\mu$ in~$\msP$.
In fact, this simplification is not limited to the bracket, but also applies to the Levi-Civita connection and to the Riemann tensor.

{\footnotesize
\begin{figure}[htb!]
\begin{tikzpicture}[scale=.75, >={Latex},
    shorten >=-4pt,
    shorten <=-4pt,]


\draw[fill] (0,0) circle (1pt) node[below left] {$p$};
\node (o) at (0,0) {};
\node (p) at (0,5) {};
\node (pw) at (2.5,5) {};
\node (q) at (5,0) {};
\node (qv) at (5,2.5) {};
\node (P) at (5,7) {};
\node (Q) at (7,5) {};

\draw [->] (o) -- (p) node[midway, above left] {$\eps w_p$};
\draw [->] (o) -- (q) node[midway, below right] {$\eps v_p$\phantom{$V^v_\mu$}};
\draw [->] (p) -- (pw) node[midway, below={4pt}] {$\parallel_{\eps w}(\eps v_p)$};
\draw [->] (q) -- (qv) node[midway, left={2pt}] {$\parallel_{\eps v}(\eps w_p)$};
\draw [->] (qv) -- (pw) node[midway, below left = 3pt and -8pt] {$\eps^2 T^\nabla(v,w)$};
\draw [->] (pw) -- (P) node[midway, below right = -2 pt and -4pt] {$\eps \nabla_w(\eps v)$};
\draw [->] (qv) -- (Q) node[midway, above left = -2pt and -3pt] {$\eps \nabla_v(\eps w)$};
\draw [->] (p) -- (P) node[midway, left={8pt}] {$\eps v_{p+\eps w}$};
\draw [->] (q) -- (Q) node[midway, right={1pt}] {$\eps w_{p+\eps v}$};
\draw [->] (Q) -- (P) node[midway, above right] {$-\eps^2[v,w]$};
\end{tikzpicture}
~\qquad\qquad~
\begin{tikzpicture}[scale=.75, >={Latex},
    shorten >=-4pt,
    shorten <=-4pt,]


\draw[fill] (0,0) circle (1pt) node[below left] {$\mu$};
\node (o) at (0,0) {};
\node (p) at (0,5) {};
\node (pw) at (2.5,5) {};
\node (q) at (5,0) {};
\node (qv) at (5,2.5) {};
\node (P) at (5,7) {};
\node (Q) at (7,5) {};

\draw [->] (o) -- (p) node[midway, above left] {$\eps V^w_\mu$};
\draw [->] (o) -- (q) node[midway, below right] {$\eps V^v_\mu$};
\draw [->] (p) -- (pw); 
\draw [->] (q) -- (qv); 
\draw [->] (qv) -- (pw) node[midway, below left = 3pt and -8pt] {$\eps^2 \mbfT^\nabla(V^v,V^w)$};
\draw [->] (pw) -- (P) node[midway, below right = 2pt and -10pt] {$\eps^2 \boldOmega^\nabla\!(V^v)V^w$};
\draw [->] (qv) -- (Q) node[midway, above left = 1pt and -10pt] {$\eps^2 \boldOmega^\nabla\!(V^w)V^v$};
\draw [->] (p) -- (P) node[midway, left= 5pt] {$\eps V^v_{\mu+\eps V^w}$};
\draw [->] (q) -- (Q) node[midway, right={1pt}] {$\eps V^w_{\mu+\eps V^v}$};
\draw [->] (Q) -- (P) node[text width=3cm, midway, above right] {$-\eps^2\pbracket{V^v}{V^w} =$ $\eps^2 \mbfT^\intr(V^v,V^w)$};
\end{tikzpicture}

\caption{(\emph{left}) A graphical representation on vector fields~$v,w$ at a point~$p$ of a connection~$\nabla$ on~$M$, of its torsion~$T^\nabla$, and of its parallel transport~$\parallel$, to be compared with their counterparts on~$\msP$ (right).
\\
\phantom{i}\quad (\emph{right}) A graphical representation on \emph{constant} vector fields~$V^v, V^w$ at a point~$\mu$ of the connection~$\conn^\nabla$ on~$\msP$ induced by a connection~$\nabla$ on~$M$, to be compared with their counterparts on~$M$ (left). (We denote by~$\mbfT^\nabla$ the torsion of the connection~$\conn^\nabla$ on~$\msP$.)
Since~$\conn^\intr$ vanishes on constant fields, the connection coincides with the $\End$-valued one-form, viz.~$\conn^\nabla_{V^w}V^v= \boldOmega^\nabla(V^v)V^w$ and analogously when exchanging $v$ and~$w$.
The torsion~$\mbfT^\intr$ of the intrinsic connection coincides with the negative pseudo-bracket of vector fields (Prop.~\ref{p:Torsion}), which, for constant fields, coincides in turn with the full bracket (Rmk.~\ref{r:BracketConstantFields}).
}\label{fig:Torsion}
\end{figure}
}

\begin{figure}[htb!]

\tikzset{
    mark position/.style args={#1(#2)}{
        postaction={
            decorate,
            decoration={
                markings,
                mark=at position #1 with \coordinate (#2);
            }
        }
    }
}

\begin{tikzpicture}[scale=1, >={Latex}, declare function={curva(\x)=ln(\x)+3*sin(12*(\x));}]
\footnotesize

\def\n{11};
\def\init{3};

\draw[smooth, dashed, samples=100, variable=\t, domain=\init:\n] plot ({\t}, {curva(\t)});

\foreach \x [evaluate=\x as \y using int((\x-\init)/2)] in {3,5,...,\n}
	{
		\draw[fill, black, opacity=\x/(3\n), cm={.4,0,0,.6,(\x,curva(\x))}] (-5,-4) -- (-4,5) -- (5,4) -- (4, -5) -- (-5,-4) node[black, opacity=\x/\n, above left=-3pt and 70pt] at (5,4) {$T^\der_{\mu_{t_{\y}}}\msP$};
		
		\begin{scope}
		\clip[cm={.4,0,0,.6,(\x,curva(\x))}] (-5,-4) -- (-4,5) -- (5,4) -- (4, -5) -- cycle;
		
		\draw[black, opacity=\x/\n, cm={cos(6*\x-3*pi),-sin(6*\x-3*pi),sin(6*\x-3*pi),cos(6*\x-3*pi),(\x,curva(\x))}, variable=\t, domain=-6:0, mark position=-.3(a)] plot ({\t}, 0);
		
		\draw[black, opacity=\x/\n, cm={cos(6*\x-3*pi),-sin(6*\x-3*pi),sin(6*\x-3*pi),cos(6*\x-3*pi),(\x,curva(\x))}, variable=\t, domain=1.5*cos(6*\x-3*pi)+.6:4] plot ({\t}, 0);

		\draw[->, black, opacity=\x/\n, cm={1,0,0,1,(\x,curva(\x))}] (0,0) -- (1.5,0) node[above left= 0pt and 5pt] {$
		w$};
		
		\draw[->, black, opacity=\x/\n, cm={cos(6*\x-3*pi)^2, -cos(6*\x-3*pi)* sin(6*\x-3*pi), -cos(6*\x-3*pi)* sin(6*\x-3*pi), sin(6*\x-3*pi)^2,(\x,curva(\x))}] (0,0) -- (1.5,0) node[below, opacity=\x/\n] {$\Pi_{\mu_{t_{\y}}}\! w$};
		\end{scope}
		
		\node at (a) [below left = 0pt and -5pt, black,opacity=\x/\n] {$T^\grad_{\mu_{t_{\y}}}\msP$};
	};
\end{tikzpicture}

\caption{A graphical representation of the projection~$\Pi_\mu$ onto the usual tangent space~$T^\grad_\mu\msP=(\ker\div_\mu)^\tperp$  (rendered as a line), a subspace of the pseudo-tangent space~$T^\der_\mu\msP$ (rendered as a plane).
For a vector field~$V^w=w$ constant along a curve~$\seq{\mu_t}$ (dashed), its $\div_{\mu_t}$-free part~$\Pi_{\mu_t}w$ \emph{changes} along the curve, since so does the way in which~$T^\grad_{\mu_t}\msP$ lies inside~$T^\der_{\mu_t}\msP$.
}
\label{fig:Lift}
\end{figure}

\section{Preliminaries}
Let~$M$ be a closed (i.e., compact boundaryless) smooth connected oriented differential manifold of dimension~$n$.
We denote by~$TM$, resp.~$T^*M$, the tangent, resp.\ cotangent bundle to~$M$.
We denote by~$\mfX^\infty$ the vector space of (smooth) vector fields (i.e., smooth sections of~$TM$), and, for~$k\in \N_0$, by~$\Omega^\infty_k$ the alternating algebra (with~$\wedge$) of (smooth) differential $k$-forms (i.e., smooth sections of the $k$-th exterior power of~$T^*M$).
For a differential form~$\omega\in\Omega^\infty_k$, we let~$\diff\omega\in\Omega^\infty_{k+1}$ be the exterior differential of~$\omega$.
Evaluation at a point is often denoted by a subscript and contracted over operators.
For example,~$\diff_x f=(\diff f)_x$ denotes the differential of a function~$f$ at a point~$x$ in~$M$.

\paragraph{Flows}
When endowed with the standard Lie bracket~$[\emparg, \emparg]$, the space~$\mfX^\infty$ is a Lie algebra.
For a vector field~$w\in\mfX^\infty$ we denote by~$\uppsi^{w,t}\colon M\to M$ the flow of~$w$, defined pointwise as the unique solution to the \textsc{ode}
\[
\diff_t \tparen{\uppsi^{w,t}(x)} = w\tparen{\uppsi^{w,t}(x)},\quad {\uppsi^{w,0}(x)=x}\fstop
\]
Since~$M$ is closed,~$\uppsi^{w,t}$ exists for each~$t\in\R$ and~$\seq{\uppsi^{w,t}}_{t}$ is a one-parameter subgroup of
\[
\Diff^\infty(M)_0\eqdef \{f\in \Diff^{\infty}(M) : f\ \mbox{is smoothly isotopic to the identity}\} \fstop
\]

\paragraph{Riemannian structure}
In the following, let~$g$ be any Riemannian metric on~$M$.
We denote by~$\mssd_g$ the intrinsic distance and by~$\vol_g$ the Riemannian volume measure induced on~$M$ by~$g$.
Since~$M$ is compact, we may and will always assume that~$g$ is normalized in such a way that~$\vol_g M=1$.
Further, let~$g^\sharp\colon T^*M\to TM$ be the musical isomorphism induced by~$g$ on~$T^*M$.
With a standard abuse of notation we also denote by~$g^\sharp$ the induced map on sections~$g^\sharp\colon \Omega^\infty_1\to\mfX^\infty$.
Finally, we denote by~$\mfX^\infty_\grad=g^\sharp \diff \Cinfty$ the space of gradient-type vector fields.
Note that~$\mfX^\infty_\grad$ depends on~$g$. This dependence is omitted from the notation for simplicity.

\subsection{The $W_2$-metric structure of~$\msP$}
Let~$\msP$ be the space of all Borel probability measures on~$M$.
The \emph{$L^2$-Kantorovich--Rubinstein} distance~$W_2$ on~$\msP$ is defined by
\begin{equation}\label{eq:KR}
W_2(\mu,\nu)^2 = \inf \int \mssd_g(x,y)^2\diff \pi(x,y)\comma \qquad \mu,\nu\in\msP\comma
\end{equation}
where the infimum is taken over all couplings~$\pi$ of~$\mu$ and~$\nu$.
Since~$M$ is a compact manifold, the \emph{$L^2$-Wasserstein space}~$\msP_2\eqdef(\msP,W_2)$ on~$M$ is a complete separable geodesic metric space, see e.g.~\cite[Ch.~6]{Vil09} and~\cite{AmbGig11}.

\subsection{The $W_2$-geometry of~$\msP$}
For each continuous real-valued function~$f$ on~$M$, denote by~$f^\trid$ the linear functional on~$\msP$ induced by integration w.r.t.~$f$, viz.
\[
f^\trid\colon \mu \longmapsto \mu f\eqdef \int f\diff\mu \fstop
\]
Each~$\mu\in\msP$ induces a pre-Hilbert semi-norm on~$\mfX^\infty$ by setting
\[
\norm{w}_\mu^2 \eqdef \mu\tparen{g(w,w)} \comma \qquad w\in\mfX^\infty \fstop
\]
We denote by~$\class[\mu]{\mfX^\infty}$ the quotient space of~$(\mfX^\infty,\norm{\emparg}_\mu)$ obtained by identifying elements with vanishing semi-norm~$\norm{\emparg}$, and again by~$\norm{\emparg}_\mu$ the pre-Hilbert norm on~{$\class[\mu]{\mfX^\infty}$.}
Finally, let~$\mfX_\mu$ be the Hilbert completion of~$(\class[\mu]{\mfX^\infty},\norm{\emparg}_\mu)$, and denote again by~$\norm{\emparg}_\mu$ the Hilbert norm on~$\mfX_\mu$. We define similarly $\tparen{\tclass[\mu]{\mfX^\infty_\grad},\norm{\emparg}_\mu}$.

\paragraph{Divergences}
Denote by~$\msD'$ the space of distributions on~$M$.
Each~$\mu\in\msP$ induces a \emph{divergence operator}~$\div_\mu\colon\mfX^\infty\to \msD'$ defined by
\[
w\longmapsto \Big( \scalar{\div_\mu w}{\emparg} \colon f \mapsto \mu \tparen{(\diff f)w} \Big)
\]
Since~$\scalar{\div_\mu w}{f}\leq \norm{\nabla f}_{\mu}\norm{w}_{\mu}$, then~$\div_\mu$ has a non-relabeled extension~$\div_\mu\colon \mfX_\mu \to \msD'$.
We note that our sign convention for the divergence~$\div_\mu$ may differ from the convention used elsewhere in the literature.

\paragraph{Tangent spaces}
At each~$\mu\in\msP$ we have three different tangent spaces:
\begin{itemize}
\item the (Hilbert) \emph{tangent space}~$T^\grad_\mu\msP\eqdef \cl_{\mfX_\mu}\tparen{\ttclass[\mu]{\mfX^\infty_\grad}}$;
\item the (Banach) \emph{geometric tangent space},~$T^\geo_\mu\msP$, introduced in~\cite[Dfn.~5.4]{Gig11};
\item the (Hilbert) \emph{pseudo-tangent space}~$T^\der_\mu\msP\eqdef \mfX_\mu$, considered as an auxiliary space in~\cite{GanKimPac10,ChoGan17}.
\end{itemize}

N.~Gigli introduced~$T^\geo_\mu\msP$ by showing that it coincides with an abstract completion of the space of germs of $W_2$-geodesics emanating from~$\mu$.
Furthermore, he completely characterized the relations between the spaces~$T^\grad_\mu\msP$ and~$T^\geo_\mu\msP$ in terms of \emph{transport-regularity}.
Recall that~$\mu$ is transport-regular if, for every real-valued semiconvex function~$\phi$ on~$M$, $\mu$~does not charge the singular set of~$\phi$; see~\cite[Dfn.~2.8]{Gig11}.

\begin{proposition}[Gigli,~{\cite[\S6]{Gig11}}]
$T^\grad_\mu\msP$ embeds isometrically into~$T^\geo_\mu\msP$ for every~$\mu\in\msP$.
Furthermore, the following are equivalent:
\begin{itemize}
\item $\mu$ is transport-regular;
\item $T^\grad_\mu\msP=T^\geo_\mu\msP$;
\item $T^\geo_\mu\msP$ is a Hilbert space.
\end{itemize}
\end{proposition}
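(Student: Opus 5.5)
Since this proposition is attributed to Gigli~\cite[\S6]{Gig11}, the plan is to reconstruct the argument from the structure of~$T^\geo_\mu\msP$. We start from its description as the completion of the cone of geodesic germs at~$\mu$, namely the closure of~$\set{\lambda\,(\Id,T)_\pfwd\mu - \ldots}$, or more precisely of optimal plans~$\pi\in\Opt(\mu,\nu)$ rescaled by~$\lambda>0$. These are endowed with the metric
\[
D(\pi_1,\pi_2)^2=\inf\int \ttabs{v_1-v_2}^2\diff\gamma ,
\]
where the infimum runs over couplings~$\gamma$ of the two plans that agree on the base point.

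\textbf{Isometric embedding.} First, map a gradient~$\nabla\phi$, $\phi\in\Cinfty$, to the germ of the curve~$t\mapsto(\exp(t\nabla\phi))_\pfwd\mu$. For~$t$ small, $t\phi$ is $\mssd_g^2/2$-concave, so by McCann's theorem this curve is a $W_2$-geodesic and its germ corresponds to the plan~$(\Id,\nabla\phi)_\pfwd\mu$. For two such plans the base coupling is forced to be the diagonal one, since both are maps. Hence
\[
D\big((\Id,\nabla\phi)_\pfwd\mu,(\Id,\nabla\psi)_\pfwd\mu\big)=\norm{\nabla\phi-\nabla\psi}_\mu ,
\]
and the map is a linear isometry on~$\ttclass[\mu]{\mfX^\infty_\grad}$. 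It therefore extends by density to~$T^\grad_\mu\msP$.

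\textbf{Equivalences.} For transport-regular~$\mu$, every optimal plan from~$\mu$ is induced by a map~$\exp(\nabla\varphi)$ with~$\varphi$ semiconcave. Consequently every geodesic germ is the image of a gradient of a semiconvex function. We then approximate~$\nabla\varphi$ in~$L^2_\mu$ by smooth gradients, using mollification and the fact that~$\mu$ does not charge the singular set, so that the gradients converge $\mu$-a.e. This shows that the image of~$T^\grad_\mu\msP$ is dense, giving~$T^\grad_\mu\msP=T^\geo_\mu\msP$. The implication from equality to the Hilbert property is trivial.

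\textbf{Main obstacle.} The hard step is Hilbert~$\Rightarrow$ transport-regular. By contraposition, suppose~$\mu$ charges the singular set of a semiconvex~$\phi$. On a set of positive measure, $\partial\phi$ is multivalued, so we can construct plans that genuinely split mass, for example sending~$x$ to both~$\exp(\pm v)$ with weight~$1/2$ each. The geodesic germs in directions~$\pi^+$ and~$\pi^-$ then satisfy~$D$-relations violating the parallelogram law. The point is that the coupling infimum allows rearrangements, which makes the norm strictly non-Hilbertian, since~$\ttnorm{\pi^++\pi^-}$ differs from the linear sum. The delicate part is producing the splitting plan from the non-differentiability of~$\phi$ and checking optimality for short times. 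I would handle this by using~$t\phi$ with small~$t$ and selecting measurable branches of~$\partial\phi$.
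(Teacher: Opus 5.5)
The paper does not prove this proposition; it quotes it from Gigli, so your outline has to stand on its own. Your first two steps are fine as sketches:

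\begin{enumerate}[$(a)$]
\item Small multiples of smooth functions are $c$-concave. The fibrewise coupling between two plans induced by maps is unique. Hence $\nabla\phi\mapsto(\Id,\nabla\phi)_\pfwd\mu$ preserves distances and dilations, and it extends to the closure. Calling it ``linear'' is meaningless, since $T^\geo_\mu\msP$ carries no linear structure, but this is harmless here.
\item For transport-regular $\mu$, every optimal geodesic direction is induced by the gradient of a semiconcave function. That gradient is an $L^2_\mu$-limit of smooth gradients: upper semicontinuity of the superdifferential gives convergence at every differentiability point, and the Lipschitz bound gives domination.
\end{enumerate}

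The genuine gap is the implication ``Hilbert $\Rightarrow$ transport-regular'', and it comes from exactly that missing linear structure. Expressions such as $\norm{\pi^++\pi^-}$ or ``the parallelogram law'' have no meaning in the cone $(T^\geo_\mu\msP,D)$ until you specify metrically which points play the role of sums or midpoints. Moreover, the obvious candidates give no contradiction. Take $p_1\neq p_2$ in $L^2_\mu$ and consider the points $o=(\Id,0)_\pfwd\mu$, $a=(\Id,p_1)_\pfwd\mu$, $b=(\Id,p_2)_\pfwd\mu$, $m=(\Id,\tfrac12(p_1+p_2))_\pfwd\mu$, and all dilations of the splitting plan $c=\tfrac12(a+b)$ (sum of measures). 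Their mutual $D$-distances are realised in a Hilbert space. In that model, $c$ behaves like $m$ plus a vector of length $\tfrac12\norm{p_1-p_2}_\mu$ orthogonal to every plan induced by a map.

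A symmetric split $\exp(\pm v)$ is also not available in general. At singular points you only get two distinct reachable gradients of $t\phi$, and the plan must live on the $c$-superdifferential to be optimal.

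What you need is a metric property of Hilbert spaces that plans not induced by maps violate. The absence of antipodes works. Let $D(\gamma,o)=D(\eta,o)=r$, and let $\bar v,\bar w$ be the fibrewise barycentres of $\gamma,\eta$. Testing the infimum defining $D$ with the product coupling $\int\gamma_x\otimes\eta_x\,\diff\mu(x)$ gives
\[
D(\gamma,\eta)^2\le 2r^2-2\int g(\bar v,\bar w)\,\diff\mu\le 2r^2+2r\norm{\bar v}_\mu<4r^2 .
\]
The last inequality holds as soon as $\gamma_x$ fails to be a Dirac mass on a set of positive $\mu$-measure, since strict Jensen then gives $\norm{\bar v}_\mu<r$. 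In a Hilbert space $H$, by contrast, every $h$ has the antipode $2h_0-h$ relative to $h_0$. A bijective isometry $\iota\colon T^\geo_\mu\msP\to H$ with $h_0=\iota(o)$ would pull it back to some $\eta\in T^\geo_\mu\msP$ with $D(\eta,o)=r$ and $D(\gamma,\eta)=2r$, a contradiction.

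Combine this with your construction of an optimal geodesic direction not induced by a map when $\mu$ is not transport-regular. That construction rests on the non-trivial fact that $t\phi$ is $c$-concave for small $t$, which is essentially the content of Gigli's inverse Brenier--McCann theorem. Together these would close the argument; as written, the last implication is not proved.
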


While~$T^\geo_\mu\msP$ carries all the geometric information on (directions of) geodesics emanating from~$\mu$, the simpler space~$T^\grad_\mu\msP$ only carries information on those geodesics emanating from~$\mu$ that are induced by transport maps.
The pseudo-tangent space~$T^\der_\mu\msP$ contains additional lines which do not correspond to geodesics emanating from~$\mu$, constituting the subspace~$\ker \div_\mu$ in the \emph{Helmholtz decomposition} (e.g.~\cite[Rmk.~2.7]{GanKimPac10})
\[
T^\der_\mu = T^\grad_\mu\msP \oplus^\perp \ker \div_\mu \comma
\]
where~$\oplus^\perp$ denotes the orthogonal direct sum of Hilbert spaces.
In the following we will denote by~$\Pi_\mu$ the orthogonal projection onto~$T^\grad_\mu\msP$, viz.
\begin{equation}\label{eq:Projection}
\Pi_\mu\colon T^\der_\mu \longrar T^\grad_\mu\msP \fstop
\end{equation}

\paragraph{Lack of smooth charts}
We stress that none of the above objects varies continuously in~$\mu$ on the whole of~$\msP$.
For instance, whenever~$\mu=\sum_i s_i\delta_{x_i}$ is a finite convex combination of Dirac masses, we have (e.g.~\cite[Ex.~2.8]{GanKimPac10})
\[
\tparen{T^\grad_\mu,\norm{\emparg}_\mu} = \tparen{T^\der_\mu,\norm{\emparg}_\mu} \cong \bigoplus_i \tparen{T_{x_i}M, s_i\sqrt{g_{x_i}}} \comma\qquad \ker\div_\mu=\set{0} \fstop
\]
In particular, when~$\mu$ is supported on~$k$ points,~$\dim T^\grad_\mu\msP = k\dim M$, whereas when~$\mu\ll \vol_g$ we have~$\dim T^\grad_\mu\msP=\infty$.
Since the set of finite convex combinations of Dirac masses is dense in~$\msP$, it follows that the dimension of~$T^\grad_\mu\msP$ is not constant in any neighborhood of every point. In particular,~$\msP$ is not a manifold modelled on any Hilbert, Banach, or Fr\'echet space.

It is nonetheless possible, as noted by J.~Lott in~\cite{Lot07}, to give a Fréchet manifold structure in the sense of~\cite{KriMic97} to the subset
\[
\msP^\infty \eqdef\set{\mu\in\msP : \mu = \rho\, \vol_g\comma \rho\in\Cinfty\comma \rho>0}
\]
of all absolutely continuous measures with smooth nowhere vanishing density.

\subsection{Cylinder functions and vector fields}

\paragraph{Cylinder functions}
Denote by~$\FC{\infty}{b}$ the set of \emph{cylinder functions} on~$\msP$ induced by~$\Cinfty$, viz.
\begin{equation}\label{eq:FC}
\FC{\infty}{b}\eqdef \set{u\colon\msP \to \R : \begin{gathered} u=F\circ\mbff^\trid , k\in\N_0, F\in \Cinfty(\R^k), \\ \mbff=\seq{f_i}_{i\leq k}, f_i\in\Cinfty\quad i\leq k \end{gathered}} \fstop
\end{equation}

\begin{remark}[See e.g.~\cite{LzDS19b}]
\begin{enumerate*}[$(a)$]
\item The representation of~$u\in \FC{\infty}{b}$ by~$F$ and~$\mbff$ is in general not unique;
\item $\FC{\infty}{b}$ is an algebra;
\item Choosing~$k=0$ in~\eqref{eq:FC} shows that~$\FC{\infty}{b}$ is unital.
\item Since~$f^\trid$ is bounded on~$\msP$ for every~$f\in \Cb$, in the definition of~$\FC{\infty}{b}$ we may equivalently choose~$F\in\Cc^\infty(\R^k)$ or~$F\in\Cinfty(\R^k)$.
\end{enumerate*}
\end{remark}

By the next lemma, cylinder functions are sufficient for all topological purposes.

\begin{lemma}
$\FC{\infty}{b}$ is dense in~$\Cb(\msP)$.
\begin{proof}
Since~$M$ is compact,~$\msP$ is compact too.
The proof follows by the Stone--Weierstra\ss\ Theorem.
\end{proof}
\end{lemma}

\begin{definition}[Differential of cylinder functions]
We define the (\emph{exterior}) \emph{differential}~$\mbfD u$ of a cylinder function~$u\in\FC{\infty}{b}$ as the ($\mu$-class of the) $1$-form
\[
\mbfD_\mu u = (\mbfD u)_\mu \colon x\longmapsto \sum_i (\partial_i F)(\mbff^\trid\mu) \diff_x f_i \comma \qquad \mu\in\msP\fstop
\]
\begin{proof}
We need to show that the definition is well-posed, i.e.\ that~$\mbfD_\mu u$ is independent of the choice of representatives~$u=F\circ \mbff^\trid$.
This is the case since, for \emph{every}~$w\in\mfX^\infty$, we have (cf.\ e.g.~\cite[Eqn.~(2.5) and Lem.~6.2]{LzDS19b})
\[
\mu\tparen{(\mbfD_\mu u)w}= \int (\mbfD_\mu u)_x w_x \diff\mu(x) = \diff_t\restr_{t=0} u\tparen{\uppsi^{w,t}_\pfwd\mu}\comma \qquad \mu\in\msP\fstop \qedhere
\]
\end{proof}
\end{definition}

\paragraph{Cylinder differential forms and cylinder vector fields}
Denote by~$\otimes$ the algebraic $\R$-tensor product.

\begin{definition}
We define the following linear spaces
\begin{itemize}
\item $\OC{k}{\infty}{b}\eqdef \FC{\infty}{b} \otimes \Omega^\infty_k$, the space of \emph{cylinder $k$-forms}, for every~$k\in\N_0$;

\item $\VC{\infty}{b}\eqdef \FC{\infty}{b} \otimes \mfX^\infty$, the space of \emph{cylinder vector fields};

\item $\VC{\infty}{b,\grad}\eqdef \FC{\infty}{b} \otimes \nabla^g\Cinfty$, the space of \emph{cylinder gradient-type vector fields}.
\end{itemize}
\end{definition}

The first definition is motivated by the fact that
\begin{equation}\label{eq:ExactCylinderForms}
\mbfD \FC{\infty}{b} \subset \FC{\infty}{b} \otimes \diff \Cinfty \fstop
\end{equation}

As for the second definition, let~$\boldsymbol{g}^\sharp\colon \OC{1}{\infty}{b}\to \VC{\infty}{b}$ be the $\FC{\infty}{b}$-linear extension of~$g^\sharp\colon \Omega^\infty_1\to\mfX^\infty$ to~$\OC{1}{\infty}{b}$.
Applying~$\boldsymbol{g}^\sharp$ on both sides of~\eqref{eq:ExactCylinderForms}, and noting that~$\boldsymbol{g}^\sharp\mbfD=\boldnabla^g$ coincides on~$\FC{\infty}{b}$ with the Wasserstein gradient on~$\msP_2(M,\mssd_g)$, while~$g^\sharp\diff=\nabla^g$ is the $g$-induced gradient on~$M$,
\[
\boldnabla^g \FC{\infty}{b} \subset \FC{\infty}{b} \otimes \nabla^g \Cinfty \fstop
\]

The application of~$\OC{k}{\infty}{b}$ to~${\VC{\infty}{b}}^\otym{k}$ is defined as the $\FC{\infty}{b}$-multilinear extension of the application of~$\Omega^\infty_k$ to~${\mfX^\infty}^\otym{k}$.
Explicitly, for a cylinder $k$-form~$\Omega = \sum_i u_i\otimes \omega_i \in \OC{k}{\infty}{b}$ and cylinder vector fields~$V^j\eqdef \sum_{i_j} u^j_{i_j} \otimes w^j_{i_j} \in \VC{\infty}{b}$, with $j\leq k$, we have
\begin{equation}\label{eq:FormAction}
\Omega(V^1,\dotsc, V^k)\eqdef \sum_i \sum_{i_1,\dotsc, i_k} u_i u_{i_1}\cdots u_{i_k} \cdot \tparen{\omega_i(w^1_{i_1}, \dotsc, w^k_{i_k})}^\trid \fstop
\end{equation}

\begin{definition}[Vector fields on functions, covariant derivatives]
Let~$V \in\VC{\infty}{b}$ and~$u\in\FC{\infty}{b}$. We define the application of~$V$ to~$u$ as the application to~$V$ of the $1$-form~$\mbfD u$, that is, as the cylinder function
\begin{equation}\label{eq:VFAction}
Vu \colon \mu\longmapsto \mu \tparen{(\mbfD_\mu u)V_\mu} = \int (\mbfD_\mu u)_x (V_\mu)_x \diff\mu(x) \fstop
\end{equation}

By analogy with the case of standard differential manifolds, we may also interpret~$Vu$ as the \emph{covariant derivative}~$\conn_V u$.
\end{definition}

\begin{remark}[Identification modulo~$\mu$]
Properly speaking, no~$V\in\VC{\infty}{b}$ is a section of~$T^\der\msP$, since in fact we always ought to regard~$V$ at~$\mu$ as its class modulo $\mu$-negligible sets.
Already for constant vector fields~$V^w\eqdef\car\otimes w$, with~$w\in\mfX^\infty$, this may cause a substantial reduction of the information carried by the vector field.
For example, in the extreme case when~$\mu=\delta_{x_0}$, the information carried by~$V^w$ at~$\mu$ ---that is, the whole vector field~$x\mapsto w_x$--- trivializes to the single vector~$w_{x_0}\in T_{x_0}M$.

In what follows, we will always disregard this issue, never introducing a notation for classes modulo $\mu$-negligible sets.
Our computations remain nonetheless rigorous at \emph{every} point~$\mu\in\msP$, since the very definition of the action~\eqref{eq:FormAction} of a $k$-form on a $k$-tuple of vector fields contains an integration w.r.t.\ the application point~$\mu$.
The same applies to the action~\eqref{eq:VFAction} of vector fields on functions, which is a particular case of~\eqref{eq:FormAction}.
In other words, the integration w.r.t.~$\mu$ in~\eqref{eq:VFAction} automatically takes care of the fact that~$x\mapsto (V_\mu)_x$ must be regarded modulo $\mu$-negligible sets.
\end{remark}

\begin{remark}[Identification modulo $\ker\div_\mu$]
By the very definition~\eqref{eq:VFAction} of the action of~$\VC{\infty}{b}$ on~$\FC{\infty}{b}$, we may \emph{equivalently} regard every~$V\in \VC{\infty}{b}$ as a section of the tangent bundle~$T^\grad\msP$, rather than of a section of the pseudo-tangent bundle~$T^\der\msP$.
Indeed, since~$\mbfD u$ is a section of~$T^\grad\msP$ for every~$u\in\FC{\infty}{b}$ by definition of~$\mbfD$,
\[
(Vu)_\mu = \mu\tparen{(\mbfD_\mu u)V_\mu} =  \mu\paren{\tparen{\Pi_\mu(\mbfD_\mu u)}V_\mu} =  \mu\tparen{(\mbfD_\mu u)(\Pi_\mu V_\mu)} = \tparen{(\Pi V)u}_\mu\comma
\]
and therefore
\begin{equation}\label{eq:KernelPi}
Vu = (\Pi V)u \comma \qquad V\in\VC{\infty}{b}\comma u\in\FC{\infty}{b}\fstop
\end{equation}
\end{remark}

\begin{remark}[Vector fields as `global' derivations]
It is shown in~\cite[Prop.~6.3]{LzDS19b} that the map~$\partial\colon V\mapsto \boldnabla_V$ is a linear injection of~$\VC{\infty}{b}$ into the space of derivations~$\Der(\FC{\infty}{b})$.
In particular, the image via $\partial$ of~$\VC{\infty}{b,\grad}$ is a \emph{proper} subspace of~$\Der(\FC{\infty}{b})$.
This does not contradict~\eqref{eq:KernelPi}, since for at least some~$\mu\in\msP$ we have~$\ker\div_\mu=\set{0}$.
In particular, it is not true that~$V_\emparg = \Pi_{\mu_o} V_\emparg$ for some fixed~$\mu_o$, not even for~$\mu_o= \vol_g$.
\end{remark}

\subsection{Intrinsic connection and brackets}\label{ss:IntrinsicConnection}
We define an operator~$\conn^\intr$ as the $\mfX^\infty$-linear extension to~$\VC{\infty}{b}$ of the differential~$\mbfD\colon \FC{\infty}{b}\to \OC{1}{\infty}{b}$.
Precisely,~$\conn^\intr$ is the linear extension of the operator on elementary tensors~$u\otimes v$ defined as
\begin{align*}
\conn^\intr \colon \VC{\infty}{b} &\longrar \FC{\infty}{b} \otimes (\Omega^\infty_1\times \mfX^\infty)
\end{align*}
satisfying, for every~$(V,\Omega)\in \VC{\infty}{b} \otimes \OC{1}{\infty}{b}$ and every~$u\in\FC{\infty}{b}$,
\begin{equation}\label{eq:Connection}
\paren{\tparen{\conn^\intr _\mu (u\otimes w)} (V,\Omega)}_\mu \eqdef \iint \tparen{(\mbfD_\mu u) V_\mu}_x \cdot (\Omega_\mu w)_y \diff\mu^\otym{2}(x,y)\fstop
\end{equation}

\begin{proposition}[Intrinsic connection]\label{p:Connection}
The operator~$\conn^\intr$ is a connection on~$\VC{\infty}{b}$, that is, it is $\R$-linear and satisfies the product rule
\begin{align}\label{eq:p:Connection:0}
\conn^\intr(vZ) = v \conn^\intr Z + \mbfD v \otimes Z\comma \qquad v\in\FC{\infty}{b}\comma \quad Z\in\VC{\infty}{b} \fstop
\end{align}

\begin{proof}
$\R$-linearity is straightforward, so we only verify the product rule~\eqref{eq:p:Connection:0}.
We need to show that, for every~$V, Z\in\VC{\infty}{b}$, every~$\Omega\in \OC{1}{\infty}{b}$, and every~$v\in\FC{\infty}{b}$,
\begin{align*}
\paren{\tparen{\conn^\intr _\mu (vZ)} (V,\Omega)}_\mu = v_\mu \cdot\paren{\tparen{\conn^\intr _\mu Z} (V,\Omega)}_\mu + (Vv)_\mu \cdot (\Omega Z)_\mu \fstop
\end{align*}
By $\R$-linearity of~$Z\mapsto \paren{\tparen{\conn^\intr _\mu Z} (V,\Omega)}_\mu$ it suffices to prove the above equality on elementary tensors~$Z=u\otimes w$ with~$u\in\FC{\infty}{b}$.
We have
\begin{align*}
&\paren{\tparen{\conn^\intr _\mu (uv\otimes w)} (V,\Omega)}_\mu
\\
&= \iint \tparen{(\mbfD_\mu u) v_\mu V_\mu}_x \cdot (\Omega_\mu w)_y \diff\mu^\otym{2}(x,y)
+\iint \tparen{u_\mu\, (\mbfD_\mu v) V_\mu}_x \cdot (\Omega_\mu w)_y \diff\mu^\otym{2}(x,y)
\\
&= v_\mu \iint \tparen{(\mbfD_\mu u) V_\mu}_x \cdot (\Omega_\mu w)_y \diff\mu^\otym{2}(x,y)
+\int \tparen{(\mbfD_\mu v) V_\mu}_x \diff\mu(x) \cdot \int u_\mu\, (\Omega_\mu w)_y \diff\mu(y)
\\
&= \paren{v\tparen{\conn^\intr _\mu (u\otimes w)} (V,\Omega)}_\mu + (Vv)_\mu \cdot \tparen{\Omega (u\otimes w)}_\mu \fstop \qedhere
\end{align*}
\end{proof}
\end{proposition}

It follows from the proposition that setting
\[
\tparen{(\conn^\intr_VZ) u}_\mu \eqdef \tparen{(\conn^\intr_\mu Z)(V,\mbfD_\emparg u)}_\mu
\]
defines a \emph{covariant derivative} of~$Z$ along~$V$ computed on a function~$u$ at the point~$\mu$.

We call~$\conn^\intr$ the \emph{intrinsic connection on~$\msP$}.
Indeed,~$\conn^\intr$ `does not see' the base space~$M$, in the sense that, for every lifted vector field~$V^w\eqdef \car\otimes w$, with~$w\in\mfX^\infty$, 
\begin{equation}\label{eq:IntrinsicVanishing}
\conn^\intr V^w \equiv 0\fstop
\end{equation}

\begin{remark}[Identification modulo~$\ker\div_\mu$]
For every pair of cylinder vector fields~$V,Z\in\VC{\infty}{b}$ and every cylinder function~$v\in\FC{\infty}{b}$ we have
\begin{equation}\label{eq:r:IntrConnectionModKerDiv}
(\conn^\intr_V Z)v = \tparen{\conn^\intr_{\Pi V} (\Pi Z)}v \fstop
\end{equation}
\begin{proof}
By~$\R$-bilinearity of~$\conn^\intr$ and $\R$-linearity of~$\Pi$, it suffices to verify the assertion when~$V,Z$ are elementary-tensor vector fields.
Letting~$Z=u\otimes w$ and applying~\eqref{eq:Connection},
\begin{align*}
\tparen{\conn^\intr_V (u\otimes w)}v &= \tparen{\conn^\intr (u\otimes w)}(V,\mbfD v)
\\
&= \iint \tparen{(\mbfD_\mu u) V_\mu}_x \cdot \tparen{(\mbfD_\mu v)w}_y \diff\mu^\otym{2}(x,y)
\\
&= \int \tparen{(\mbfD_\mu u) (\Pi_\mu V_\mu)}_x \diff\mu(x) \int \tparen{(\mbfD_\mu v) (\Pi_\mu w)}_y \diff\mu(y)
\\
&= \tparen{\conn^\intr (u\otimes \Pi_\emparg w)}(\Pi V,\mbfD v)
\\
&= \tparen{\conn^\intr_{\Pi V}(\Pi Z)} v \fstop \qedhere
\end{align*}
\end{proof}
\end{remark}

\subsubsection{Brackets}
Since the linear structure of~$\VC{\infty}{b}$ is inherited from that of~$\mfX^\infty$ by taking an $\FC{\infty}{b}$-linear extension, one is tempted to define a Lie bracket of vector fields 
\[
\pbracket{\emparg}{\emparg}\colon \VC{\infty}{b}\times \VC{\infty}{b} \to \VC{\infty}{b}
\]
as the~$\FC{\infty}{b}$-bilinear extension to~$\VC{\infty}{b}$ of the Lie bracket~$[\emparg,\emparg]$ on~$\mfX^\infty$.
It is readily verified that this definition is well-posed and gives rise to a Lie algebra structure on~$\VC{\infty}{b}$.
The symbol~$\pbracket{\emparg}{\emparg}$ is introduced for simplicity of notation: indeed, we might write~$\pbracket{\emparg}{\emparg}$ in a more suggestive way as the pullback to~$\VC{\infty}{b}$ of the bracket~$[\emparg,\emparg]$ on~$\mfX^\infty$ via the evaluation map~$\ev_\mu\colon \VC{\infty}{b}\to \mfX^\infty$, viz.
\[
\pbracket{V}{Z}_\mu = \tparen{{\ev_\mu}^* [\emparg,\emparg]}(V,Z) = [V_\mu,Z_\mu] \fstop
\]

However, this is \emph{not} the right definition in the spirit of differential geometry, since (by definition)
\begin{equation}\label{eq:PseudoBracket}
\pbracket{V}{uZ}= u \pbracket{V}{Z} \comma \qquad V,Z\in\VC{\infty}{b}\comma \quad u\in\FC{\infty}{b}\comma
\end{equation}
which means that~$\tparen{\VC{\infty}{b},\pbracket{\emparg}{\emparg}}$ is \emph{not} a Lie algebroid with identical anchor in the sense of e.g.~\cite[Dfn.~III.2.1, p.~100]{Mac87}.
Thus, we call~$\pbracket{\emparg}{\emparg}$ the \emph{pseudo-bracket} on~$\VC{\infty}{b}$.

Another natural definition of bracket is the following.

For cylinder vector fields~$V, Z\in\VC{\infty}{b}$ we set, for every~$u\in\FC{\infty}{b}$,
\begin{align}\label{eq:d:Bracket:0}
\tparen{\bbracket{V}{Z}u}_\mu \eqdef \tparen{V(Z u)}_\mu - \tparen{Z (Vu)}_\mu \fstop
\end{align}

\begin{proposition}[Bracket of vector fields]
The quantity~$\bbracket{V}{Z}$ defines a \emph{bracket} on cylinder vector fields~$V,Z\in \VC{\infty}{b}$.
\end{proposition}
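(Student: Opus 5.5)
The claim is that the commutator $\bbracket{V}{Z}$, defined through its action on $\FC{\infty}{b}$ in \eqref{eq:d:Bracket:0}, is again induced by a cylinder vector field; i.e.\ there is an element of $\VC{\infty}{b}$ whose action \eqref{eq:VFAction} on every $u$ equals $V(Zu)-Z(Vu)$. Moreover it should be an $\R$-bilinear, antisymmetric operation satisfying the Jacobi identity and the Leibniz rule $\bbracket{V}{vZ}=v\bbracket{V}{Z}+(Vv)Z$. I will produce an explicit formula and check it.

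\emph{Step 1: the commutator is a derivation.} Since both $V$ and $Z$ act as derivations of the algebra $\FC{\infty}{b}$ (via the chain rule for $\mbfD$), $u\mapsto V(Zu)-Z(Vu)$ is $\R$-linear and satisfies the Leibniz rule. This is the standard commutator argument. Antisymmetry and Jacobi then follow formally from the commutator structure.

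\emph{Step 2: explicit computation on elementary tensors.} Take $V=a\otimes v$, $Z=b\otimes w$ and $u=F\circ\mbff^\trid$. Then
\[
Zu=b\sum_i(\partial_iF)(\mbff^\trid)\,(\diff f_i\, w)^\trid .
\]
Applying $V$ and using the product and chain rules, I get
\[
V(Zu)=a(Vb)\,(\ldots)+ab\sum_{i,j}\partial_{ij}F\,(\diff f_j v)^\trid(\diff f_i w)^\trid+ab\sum_i\partial_iF\,\big(v(\diff f_i w)\big)^\trid .
\]
Antisymmetrizing, the Hessian terms cancel by symmetry of $\partial_{ij}F$. The last terms combine to $ab\sum_i\partial_iF\,(\diff f_i[v,w])^\trid$, which is the action of $\pbracket{V}{Z}$ on $u$. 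This gives the identity
\[
\bbracket{V}{Z}=\pbracket{V}{Z}+(Vb)\,a^{-1}\!\cdot\!(\ldots),
\]
more precisely
\[
\bbracket{a\otimes v}{b\otimes w}=ab\otimes[v,w]+a(Vb)\otimes w-b(Za)\otimes v\in\VC{\infty}{b}.
\]
Here $Vb$ and $Za$ are cylinder functions by \eqref{eq:VFAction}, since $(\mbfD b)V$ integrated against $\mu$ is a product of cylinder functions with $(\diff g\, v)^\trid$ terms. Extending by bilinearity, with the appropriate sums, defines $\bbracket{V}{Z}$ on all of $\VC{\infty}{b}$.

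\emph{Step 3: well-posedness.} The main obstacle is that the right-hand side above is written in terms of representatives, and elementary tensor decompositions are non-unique. Two issues arise. First, the left-hand side is defined only through its action on functions, and by \eqref{eq:KernelPi} that action only sees $\Pi V$; so as an operator on functions there is nothing to check. Second, to assert membership in $\VC{\infty}{b}$ as a tensor, I would check that the formula is $\R$-bilinear and balanced in the tensor product, i.e.\ compatible with $(ca)\otimes v=a\otimes(cv)$ for constants $c$. This holds because every term is multilinear in $(a,v)$ and in $(b,w)$. Hence the formula descends to $\FC{\infty}{b}\otimes\mfX^\infty$.

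Finally, I would verify that the resulting element acts as the commutator on every $u$, which is exactly Step 2. This identifies $\bbracket{\emparg}{\emparg}$ as a genuine bracket extending the pseudo-bracket, with the correct Leibniz rule replacing \eqref{eq:PseudoBracket}.
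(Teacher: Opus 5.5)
Your argument is the paper's. You compute the commutator on elementary tensors, cancel the Hessian terms by symmetry of $\partial^2_{ij}F$, and combine the remaining second-order terms into the pseudo-bracket via \eqref{eq:DifferentialBracket}. Reading off the resulting cylinder vector field is exactly \eqref{eq:ExplicitBracket}.

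There is, however, a bookkeeping slip in your final formula. Since $V=a\otimes v$, the function $Vb=a\,\mu\tparen{(\mbfD_\mu b)v}$ already carries the factor $a$. Likewise $Za=b\,\mu\tparen{(\mbfD_\mu a)w}$ already carries $b$. The correct identity is therefore
\[
\bbracket{a\otimes v}{b\otimes w}=ab\otimes[v,w]+(Vb)\otimes w-(Za)\otimes v ,
\]
or equivalently $ab\otimes[v,w]+a\,(V^v b)\otimes w-b\,(V^w a)\otimes v$, with the constant fields $V^v=\car\otimes v$ and $V^w=\car\otimes w$. As written, your $a(Vb)$ and $b(Za)$ double-count these factors, and the same slip appears in your display for $V(Zu)$. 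The intermediate line involving $a^{-1}$ should simply be deleted, since $a$ may vanish.

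Your Step~3 check that the formula is $\R$-multilinear in $(a,v,b,w)$, and hence descends to $\FC{\infty}{b}\otimes\mfX^\infty$, is a small addition the paper does not spell out. The paper instead identifies the commutator derivation directly with an element of $\VC{\infty}{b}$. Your appeal to \eqref{eq:KernelPi} there is beside the point.

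Antisymmetry is visible directly from your formula. Transferring the Jacobi identity from derivations to elements of $\VC{\infty}{b}$, however, uses the injectivity of $V\mapsto\boldnabla_V$. The paper recalls this injectivity in its remark on vector fields as global derivations and relies on it implicitly in the same way.
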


\begin{proof}[Proof]
Let us first compute~$\bbracket{V}{Z}$ explicitly in order to show that it is a cylinder vector field.
By $\R$-linearity of~$V\mapsto Vu$ in~\eqref{eq:VFAction}, by $\R$-linearity of~$\conn^\intr$ in both variables, and by~$\R$-linearity of~$\bbracket{\emparg}{\emparg}$ in both variables, it suffices to show the assertion when~$V^1$,~$V^2$ are elementary tensors. Thus, let~$V^j=u^j\otimes w^j$, $j=1,2$, and~$u=F\circ \mbff^\trid$. Then,
\begin{align*}
\tparen{V^1 V^2 u}_\mu &= \mu\paren{\paren{\mbfD_\mu\tparen{(\mbfD_\emparg u)V^2_\emparg}^\trid}V^1_\mu}
\\
&= \mu\paren{\paren{\mbfD_\mu \paren{u^2 \sum_i^k (\partial_i F)\circ \mbff^\trid\cdot \tparen{(\diff f_i)w^2}^\trid }} V^1_\mu}
\\
&= \mu\paren{\sum_i^k (\partial_i F)(\mbff^\trid\mu) \cdot \tparen{(\diff f_i)w^2}^\trid\mu \cdot (\mbfD_\mu u^2)V^1_\mu}
\\
&\qquad + \mu\paren{u^2_\mu \sum_i \paren{\mbfD_\mu \tparen{(\partial_i F) \circ \mbff^\trid}}V^1_\mu \cdot \tparen{(\diff f_i)w^2}^\trid\mu}
\\
&\qquad + \mu\paren{u^2_\mu \sum_i (\partial_i F)(\mbff^\trid\mu) \cdot \tparen{\mbfD_\mu\tparen{(\diff f_i)w^2}^\trid} V^1_\mu}
\\
&= \mu \tparen{(\mbfD_\mu u)w^2} \cdot \mu\paren{(\mbfD_\mu u^2) V^1_\mu}
\\
&\qquad + u^2_\mu \sum_i^k\tparen{(\diff f_i)w^2}^\trid\mu \cdot \mu\paren{\paren{\mbfD_\mu \tparen{(\partial_i F) \circ \mbff^\trid}}V^1_\mu}
\\
&\qquad + u^2_\mu \sum_i^k (\partial_i F)(\mbff^\trid\mu) \cdot \mu\paren{\tparen{\mbfD_\mu\tparen{(\diff f_i)w^2}^\trid} V^1_\mu}
\\
&= \mu \tparen{(\mbfD_\mu u)w^2} \cdot \mu\paren{(\mbfD_\mu u^2) V^1_\mu}
\\
&\qquad + u^2_\mu \sum_i^k\tparen{(\diff f_i)w^2}^\trid\mu \cdot \mu\paren{\sum_j^k (\partial^2_{ij} F)(\mbff^\trid\mu) \cdot (\diff f_j) V^1_\mu}
\\
&\qquad + u^2_\mu \sum_i^k (\partial_i F)(\mbff^\trid\mu) \cdot \mu\paren{\diff\tparen{(\diff f_i)w^2} V^1_\mu}
\\
&= \mu \tparen{(\mbfD_\mu u)w^2} \cdot \mu\paren{(\mbfD_\mu u^2) V^1_\mu}
\\
&\qquad + u^2_\mu u^1_\mu \sum_{i,j} \tparen{(\diff f_i)w^2}^\trid\mu \cdot \tparen{(\diff f_j) w^1}^\trid\mu \cdot (\partial^2_{ij} F)(\mbff^\trid\mu)
\\
&\qquad + u^2_\mu u^1_\mu \sum_i (\partial_i F)(\mbff^\trid\mu) \cdot \mu\paren{w^1\tparen{(\diff f_i)w^2}} \fstop
\end{align*}

For a $1$-form~$\omega\in\Omega^\infty_1$ and vector fields~$w^1,w^2\in\mfX^\infty$ we have, by definition of the exterior differential on~$M$,
\[
\diff \omega (w^1,w^2) = w^1(\omega w^2) - w^2(\omega w^1) - \omega[w^1,w^2] \fstop
\]
Substituting~$\omega=\diff f_i$, we see that
\begin{equation}\label{eq:DifferentialBracket}
w^1\tparen{(\diff f_i) w^2} - w^2\tparen{(\diff f_i) w^1} = (\diff f_i) [w^1,w^2] \fstop
\end{equation}

Thus, we conclude that
\begin{align}
\nonumber
\tparen{\bbracket{V^1}{V^2}u}_\mu&\eqdef \tparen{V^1 V^2u}_\mu - \tparen{V^2V^1 u}_\mu
\\
\nonumber
&= \mu \tparen{(\mbfD_\mu u)w^2} \cdot \mu\paren{(\mbfD_\mu u^2) V^1_\mu} - \mu \tparen{(\mbfD_\mu u)w^1} \cdot \mu\paren{(\mbfD_\mu u^1) V^2_\mu}
\\
\nonumber
&\qquad + u^1_\mu u^2_\mu\cdot \mu \tparen{(\mbfD_\mu u)\bracket{w^1,w^2}}
\\
\label{eq:ExplicitBracket}
&= \mu \tparen{(\mbfD_\mu u)w^2} \cdot \mu\paren{(\mbfD_\mu u^2) V^1_\mu} - \mu \tparen{(\mbfD_\mu u)w^1} \cdot \mu\paren{(\mbfD_\mu u^1) V^2_\mu}
\\
\nonumber
&\qquad+\tparen{\pbracket{V^1}{V^2} u}_\mu \fstop
\end{align}
This shows that~$\bbracket{V^1}{V^2}$ acts as the derivation induced by a cylinder vector field in~$\VC{\infty}{b}$, and thus that it can be identified with an element of~$\VC{\infty}{b}$.

The $\R$-bilinearity follows from~\eqref{eq:VFAction}. The alternating property, anticommutativity, and the Jacobi identity hold since~$\bbracket{\emparg}{\emparg}$ is a commutator by definition.
\end{proof}

We are now in the position to easily check that the bracket~$\bbracket{\emparg}{\emparg}$ is the right one according to our differential-geometric understanding.

\begin{corollary}\label{c:LieAlgebroid}
The space~$\tparen{\VC{\infty}{b},\bbracket{\emparg}{\emparg}}$ is a Lie algebroid on~$\FC{\infty}{b}$ with identical anchor, that is, for every~$V, Z\in\VC{\infty}{b}$ and every~$v\in\FC{\infty}{b}$,
\begin{equation}\label{eq:c:LieAlgebroid:0}
\bbracket{V}{vZ} = v\bbracket{V}{Z}+ (Vv) Z \fstop
\end{equation}

\begin{proof}
Since~$\bbracket{\emparg}{\emparg}$ is a commutator,~$\tparen{\VC{\infty}{b},\bbracket{\emparg}{\emparg}}$ is a Lie algebra.
As for~\eqref{eq:c:LieAlgebroid:0}, by~\eqref{eq:p:Connection:0} we have
\begin{align*}
&\tparen{(\conn^\intr _\mu v Z)(V,\mbfD_\emparg u)}_\mu - \tparen{(\conn^\intr _\mu V)(v Z,\mbfD_\emparg u)}_\mu
\\
&= v_\mu\cdot \tparen{(\conn^\intr _\mu Z)(V,\mbfD_\emparg u)}_\mu + (Vv)_\mu \cdot \tparen{(\mbfD_\mu u) Z}_\mu - v_\mu \cdot \tparen{(\conn^\intr _\mu V)(Z,\mbfD_\emparg u)}_\mu
\\
&= v_\mu \paren{ \tparen{(\conn^\intr _\mu Z)(V,\mbfD_\emparg u)}_\mu - \tparen{(\conn^\intr _\mu V) (Z,\mbfD_\emparg u)}_\mu } + (Vv)_\mu \cdot (Zu)_\mu \fstop
\end{align*}
and the conclusion follows by combining the above equality with~\eqref{eq:PseudoBracket}.
\end{proof}
\end{corollary}

\begin{remark}[Constant fields]\label{r:BracketConstantFields}
The definition of the bracket is the first instance of the fact that it is not sufficient, in order to tackle the differential geometry of~$\msP_2$-spaces, to work with constant fields.
Indeed, for constant cylinder vector fields~$V^j\eqdef\car\otimes w^j$, with~$w^j\in\mfX^\infty$ for~$j=1,2$,
\[
\bbracket{V^{w_1}}{V^{w_2}} = \pbracket{V^{w_1}}{V^{w_2}} \comma
\]
that is, on constant fields the bracket reduces to the pseudo-bracket.
\end{remark}

\begin{remark}[Comparison with~\cite{Lot07}]
Our definition of bracket extends the one of~\cite{Lot07}, i.e.\ the former coincides with the latter when restricted to constant gradient-type vector fields and to measures~$\mu\in\msP^\infty$.
\begin{proof}
Let~$F=f^\trid$ and~$\mu=\rho\vol_g$.
For every~$\phi_1,\phi_2\in\Cinfty$, and setting for simplicity of notation~$w^k\eqdef \nabla \phi_k$ for~$k=1,2$, we have, in the notation of~\cite{Lot07},
\begin{align}
\tag*{\cite[(4.2)]{Lot07}}
&\tparen{[V_{\phi_1},V_{\phi_2}] F}(\rho\, \vol_g) 
\\
\nonumber
&\qquad = \diff_\eps\restr_{\eps=0} F\paren{\rho\, \vol_g + \eps \nabla^j\tparen{\nabla^i(\rho\, \nabla_i \phi_1) \nabla_j\phi_2} -\eps \nabla^j\tparen{\nabla^i(\rho\, \nabla_i \phi_2) \nabla_j\phi_1} }
\\
\nonumber
&\qquad = \int f \paren{\nabla^j\tparen{\nabla^i(\rho\, \nabla_i \phi_1) \nabla_j\phi_2} - \nabla^j\tparen{\nabla^i(\rho\, \nabla_i \phi_2) \nabla_j\phi_1} }\dvol_g
\\
\nonumber
&\qquad = \int f \paren{\div_g\tparen{\div_g (\rho w^1) w^2}- \div_g\tparen{\div_g (\rho w^2) w^1}} \dvol_g
\\
\nonumber
&\qquad = - \int \diff f \paren{\div_g(\rho w^1) w^2 - \div_g(\rho w^2)w^1} \dvol_g
\\
\nonumber
&\qquad = - \int \diff f \tparen{(\diff\rho\, w^1) w^2 + \rho (\div_g w^1) w^2 - (\diff\rho\, w^2) w^1 - \rho (\div_g w^2) w^1} \dvol_g
\\
\nonumber
&\qquad = - \int \tparen{(\diff\rho\, w^1) (\diff f\, w^2) - (\diff\rho\, w^2) (\diff f\, w^1) + \rho (\div_g w^1) (\diff f\, w^2) - \rho (\div_g w^2) (\diff f\, w^1)} \dvol_g
\\
\nonumber
&\qquad = - \int \diff\rho \tparen{(\diff f\, w^2)w^1 - (\diff f\, w^1)w^2} \dvol_g - \int \tparen{(\div_g w^1) (\diff f\, w^2) - (\div_g w^2) (\diff f\, w^1)} \rho\, \dvol_g
\\
\nonumber
&\qquad = \int \tparen{\diff\tparen{\diff f\, w^2} w^1 - \diff\tparen{\diff f\, w^1} w^2} \rho\, \dvol_g
\\
\nonumber
&\qquad\qquad+ \int \tparen{(\diff f\, w^2)(\div_g w^1) - (\diff f\, w^1)(\div_g w^2)}\rho\, \dvol_g
\\
\nonumber
&\qquad\qquad - \int \tparen{(\div_g w^1) (\diff f\, w^2) - (\div_g w^2) (\diff f\, w^1)} \rho\, \dvol_g
\\
\nonumber
&\qquad =  \int \tparen{w^1(\diff f\, w^2) - w^2(\diff f\, w^1)} \rho\, \dvol_g
\\
\label{eq:BracketLott}
&\qquad =  \int \tparen{\diff f\,  [w^1,w^2]} \rho\, \dvol_g \fstop
\end{align}
by~\eqref{eq:DifferentialBracket}.
By the chain rule,~\eqref{eq:BracketLott} extends to all~$F\in\FC{\infty}{b}$ and we thus have
\[
\tparen{[V_{\phi_1},V_{\phi_2}] F}(\rho\, \vol_g) = \pbracket{V_{\phi_1}}{V_{\phi_2}}_{\rho\vol_g} \fstop \qedhere
\]
\end{proof}
\end{remark}

\subsubsection{Torsion}
We proceed to compute the \emph{torsion} of~$\conn^\intr$, i.e.\ the operator
\[
\mbfT^\intr\colon {\VC{\infty}{b}}^\tym{2}\to \VC{\infty}{b}
\]
satisfying, for every~$(V,Z)\in {\VC{\infty}{b}}^\tym{2}$ and every~$u\in\FC{\infty}{b}$,
\[
\tparen{\mbfT^\intr(V,Z)u}_\mu\eqdef \tparen{(\conn^\intr_\mu Z)(V,\mbfD_\emparg u)}_\mu - \tparen{(\conn^\intr_\mu V)(Z,\mbfD_\emparg u)}_\mu - \tparen{\bbracket{V}{Z}u}_\mu \fstop
\]

\begin{proposition}[Torsion]\label{p:Torsion}
The torsion~$\mbfT^\intr$ 
satisfies, for every~$(V,Z)\in {\VC{\infty}{b}}^\tym{2}$,
\begin{equation}\label{eq:p:Torsion:0}
\mbfT^\intr(V,Z)= -\pbracket{V}{Z} \fstop
\end{equation}

\begin{proof}
Immediate consequence of the computation of the bracket in~\eqref{eq:ExplicitBracket}.
\end{proof}
\end{proposition}

\subsubsection{Levi-Civita connection}
Our next objective is to define the Levi-Civita connection on~$\msP$.
We start by recalling that an affine connection~$\nabla$ on~$M$ is called \emph{metric-compatible} if~$\nabla g=0$, that is, for every~$w^1,w^2,w^3\in \mfX^\infty$,
\begin{equation}\label{eq:MetricCompatibleM}
w^1 g(w^2,w^3)= g(\nabla_{w^1}w^2,w^3)+g(w^2,\nabla_{w^1}w^3) \fstop
\end{equation}
The Fundamental Theorem of Riemannian geometry states that there exists a unique metric-compatible torsion-free connection on~$(M,g)$, called the \emph{Levi-Civita connection} and denoted by~$\nabla^\lc$.

\begin{definition}[Connection pullback]
Let~$\nabla$ be any affine connection on~$M$.
We define its \emph{pullback} via the evaluation map as the $\FC{\infty}{b}$-linear extension~$\boldOmega^\nabla \colon \VC{\infty}{b}\to \OC{1}{\infty}{b}\otimes \VC{\infty}{b}$ of~$\nabla\colon \mfX^\infty \to \Omega^\infty_1\otimes \mfX^\infty$, satisfying
\[
\tparen{(\boldOmega^\nabla Z)(V,\Omega)}_\mu \eqdef \mu\tparen{\Omega_\mu (\nabla_{V_\mu} Z_\mu)} \fstop
\]
\end{definition}

Note that, since~$\OC{1}{\infty}{b}\otimes\VC{\infty}{b}$ and~$\End(\VC{\infty}{b})$ are canonically isomorphic, we may equivalently regard~$\boldOmega^\nabla$ as a $\End(\VC{\infty}{b})$-valued $1$-form~$\boldOmega\colon \VC{\infty}{b} \to \End(\VC{\infty}{b})$.

We call~$\boldOmega^\nabla$ the \emph{connection pullback}, since for every pair of lifted vector fields $V^j\eqdef \car\otimes w^j$, with~$w^j\in\mfX^\infty$ for~$j=1,2$, we have
\begin{equation}\label{eq:ConnectionPullback}
(\boldOmega^\nabla V^1)V^2 = \car\otimes \nabla_{w^2} w^1 \fstop
\end{equation}

\begin{remark}
Note that~$\boldOmega^\nabla$ is \emph{not} an affine connection on~$\msP$. Indeed, since~$v_\mu$ is a scalar on~$M$,
\begin{equation}\label{eq:OmegaConnTensorial}
\tparen{\boldOmega^\nabla(v Z) (V,\Omega)}_\mu = \mu\tparen{\Omega_\mu\nabla_{V_\mu}(v_\mu Z_\mu)} = v_\mu \,  \mu\tparen{\Omega_\mu(\nabla_{V_\mu}Z_\mu)}\fstop
\end{equation}
Thus~$\boldOmega^\nabla$ is tensorial, and does not verify the product rule~\eqref{eq:p:Connection:0} (with~$\boldOmega^\nabla$ in place of~$\conn^\intr$).
\end{remark}

\paragraph{Metric on~$\msP$ and metric-compatibility}
A Riemannian metric~$g$ on~$M$ lifts to a `Riemannian metric'~$\mbfG^\der$ on~$T^\der\msP$ by setting
\begin{equation}\label{eq:Metric}
\mbfG^\der_\mu(V,Z) = \tparen{\mbfG^\der(V,Z)}_\mu \eqdef \mu\tparen{g(V_\mu,Z_\mu)} \comma
\end{equation}
and to a `semi-definite Riemannian metric'~$\mbfG^\grad$ on~$T^\der\msP$ by setting
\begin{equation}\label{eq:MetricNabla}
\mbfG^\grad_\mu(V,Z) = \tparen{\mbfG^\der(\Pi V, \Pi Z)}_\mu \eqdef \mu\tparen{g(\Pi_\mu V_\mu,\Pi_\mu Z_\mu)} \fstop
\end{equation}

The non-relabeled restriction of~$\mbfG^\grad$ to~$T^\grad\msP$ is a `Riemannian metric' on~$T^\grad\msP$, namely Otto's formal Riemannian metric~\cite{Ott01}, and we have
\[
\mbfG^\grad \equiv \mbfG^\der \quad \text{on} \quad \VC{\infty}{b,\grad} \fstop
\]

By analogy with the case of~$(M,g)$, we say that an affine connection~$\conn$ on~$\msP$ is \emph{$\mbfG^\der$-metric-compatible} if and only if, for every~$V^1,V^2,V^3\in\VC{\infty}{b}$,
\begin{equation}\label{eq:MetricCompatibleP}
V^1\mbfG^\der(V^2,V^3) = \mbfG^\der(\conn_{V^1} V^2,V^3) + \mbfG^\der(V^2, \conn_{V^1} V^3) \comma
\end{equation}
and analogously that~$\conn$ is \emph{$\mbfG^\grad$-metric-compatible} if~\eqref{eq:MetricCompatibleP} holds with~$\mbfG^\grad$ in place of~$\mbfG^\der$ for every~$V^1,V^2,V^3\in\VC{\infty}{b,\grad}$.

In the next result, let~$\boldOmega^\lc\eqdef \boldOmega^{\nabla^\lc}$, and, for any affine connection~$\nabla$ on~$M$, set
\begin{equation}\label{eq:TrueConnection}
\conn^\nabla\eqdef \conn^\intr + \boldOmega^\nabla \fstop
\end{equation}
In light of the product rule for~$\conn^\intr$ in Proposition~\ref{p:Connection} and since~$\boldOmega^\nabla$ is a $\End\tparen{\VC{\infty}{b}}$-valued $1$-form (in particular, since~$\boldOmega^\nabla$ is tensorial, i.e.\ satisfying~\eqref{eq:OmegaConnTensorial}), the operator~$\conn^\nabla$ also satisfies the product rule~\eqref{eq:p:Connection:0} and is therefore a connection.

\begin{theorem}[Levi-Civita connection]\label{t:LCConnection}
For each connection~$\nabla$ on~$(M,g)$,
\begin{enumerate}[$(i)$]
\item\label{i:t:LCConnection:1} $\conn^\nabla$ is torsion-free if and only if $\nabla$ is torsion-free;
\item\label{i:t:LCConnection:2} $\conn^\nabla$ is $\mbfG^\der$-metric-compatible if and only if $\nabla$ is $g$-metric-compatible;
\item\label{i:t:LCConnection:3} $\conn^\nabla$ is $\mbfG^\der$-metric-compatible and torsion-free if and only if $\nabla=\nabla^\lc$ is the Levi-Civita connection on~$(M,g)$;
\end{enumerate}
and $\conn^\lc\eqdef\conn^{\nabla^\lc}$ is the unique $\mbfG^\der$-metric-compatible torsion-free affine connection on~$\msP$.
\end{theorem}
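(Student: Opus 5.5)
The plan is to reduce every assertion to an identity for elementary tensors and then to lifted constant fields, using the formulas already established. The first step is to make $\conn^\intr$ explicit. Comparing \eqref{eq:Connection} with $\Omega=\mbfD v$ shows that, for $Z=u\otimes w$,
\[
\conn^\intr_V(u\otimes w) = (Vu)\otimes w \comma
\]
so $\conn^\intr$ only differentiates the coefficient functions. Likewise, \eqref{eq:ConnectionPullback} and $\FC{\infty}{b}$-linearity give
\[
\tparen{\boldOmega^\nabla(u^2\otimes w^2)}(u^1\otimes w^1)=u^1u^2\otimes\nabla_{w^1}w^2 \comma
\]
understood pointwise in~$\mu$ after evaluation.

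\emph{Torsion, item~\ref{i:t:LCConnection:1}.} The torsion of $\conn^\nabla=\conn^\intr+\boldOmega^\nabla$ equals $\mbfT^\intr(V,Z)$ plus $(\boldOmega^\nabla Z)V-(\boldOmega^\nabla V)Z$. By Proposition~\ref{p:Torsion} the first term is $-\pbracket{V}{Z}$. For elementary tensors the total therefore becomes
\[
u^1u^2\otimes\tparen{\nabla_{w^1}w^2-\nabla_{w^2}w^1-[w^1,w^2]}=u^1u^2\otimes T^\nabla(w^1,w^2)\comma
\]
that is, $\mbfT^\nabla$ is the $\FC{\infty}{b}$-bilinear lift of $T^\nabla$. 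If $T^\nabla=0$ this vanishes. Conversely, suppose $T^\nabla(w^1,w^2)_x\neq0$ for some $w^1,w^2,x$. Choose $f\in\Cinfty$ with $\diff_xf\,T^\nabla(w^1,w^2)_x\neq0$, and apply $\mbfT^\nabla(V^{w^1},V^{w^2})$ to $u=f^\trid$ at $\mu=\delta_x$. The result is $\diff_x f\,T^\nabla(w^1,w^2)_x\neq 0$, so Dirac measures detect non-vanishing.

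\emph{Metric compatibility, item~\ref{i:t:LCConnection:2}.} Take $V^j=u^j\otimes w^j$. By definition $\mbfG^\der(V^2,V^3)=u^2u^3\,g(w^2,w^3)^\trid$, and $V^1$ acts on $g(w^2,w^3)^\trid$ as $u^1\,\mu\tparen{w^1 g(w^2,w^3)}$. The Leibniz rule then produces two kinds of terms. The terms $(V^1u^2)u^3\,g(w^2,w^3)^\trid$ and $u^2(V^1u^3)\,g(w^2,w^3)^\trid$ coincide exactly with the $\conn^\intr$ contributions. The remaining term $u^1u^2u^3\,\mu\tparen{w^1g(w^2,w^3)}$ is compared with the $\boldOmega^\nabla$ contributions $u^1u^2u^3\,\mu\tparen{g(\nabla_{w^1}w^2,w^3)+g(w^2,\nabla_{w^1}w^3)}$. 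Hence the defect in \eqref{eq:MetricCompatibleP} equals $u^1u^2u^3\,\mu\tparen{(\nabla_{w^1}g)(w^2,w^3)}$. It vanishes when \eqref{eq:MetricCompatibleM} holds. Conversely, with constant fields and $\mu=\delta_x$ it recovers $(\nabla g)_x$ pointwise.

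\emph{Item~\ref{i:t:LCConnection:3} and uniqueness.} Item~\ref{i:t:LCConnection:3} follows from items~\ref{i:t:LCConnection:1} and~\ref{i:t:LCConnection:2} together with the Fundamental Theorem on~$M$. For uniqueness, let $\conn'$ be another $\mbfG^\der$-compatible torsion-free connection. I would derive the Koszul formula on $\VC{\infty}{b}$ for $2\mbfG^\der(\conn'_XY,Z)$, using only \eqref{eq:MetricCompatibleP}, torsion-freeness and the bracket $\bbracket{\emparg}{\emparg}$, all of which are available here. The formula shows that $\mbfG^\der(\conn'_XY-\conn^\lc_XY,Z)=0$ for all $Z$. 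Taking $Z=\conn'_XY-\conn^\lc_XY$ gives $\norm{(\conn'_XY-\conn^\lc_XY)_\mu}_\mu=0$ for every $\mu$, i.e.\ equality in $T^\der_\mu\msP$ modulo $\mu$-negligible sets, which is the identification used throughout. The main obstacle I expect is precisely this step. It requires checking that the differences $\conn'_XY-\conn^\lc_XY$ lie in $\VC{\infty}{b}$, so that they are admissible test fields $Z$, and that the Koszul manipulation, which uses only the derivation action on $\FC{\infty}{b}$, is legitimate despite the fact that this action only sees $\Pi V$ by \eqref{eq:KernelPi}. Here the non-degeneracy of $\mbfG^\der$ on the full space $T^\der_\mu\msP$, and not merely on $T^\grad_\mu\msP$, is what makes the argument close.
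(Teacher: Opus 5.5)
Your proposal is correct and follows the paper's proof essentially step by step. You use elementary tensors for the direct implications and lifted constant fields for the converses; your evaluation at $\delta_x$ only makes explicit the paper's appeal to the arbitrariness of $\mu$. For uniqueness you use the Koszul-formula argument behind the Fundamental Theorem, which the paper transfers \emph{mutatis mutandis} using the non-degeneracy of $\mbfG^\der$.

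The obstacle you flag is harmless once connections are taken, as the paper implicitly does, to be $\VC{\infty}{b}$-valued. A field in $\VC{\infty}{b}$ that acts as the zero derivation vanishes identically, by the injectivity of $V\mapsto\boldnabla_V$ recalled in the paper. To see this directly, test at finitely supported measures, where $\ker\div_\mu=\{0\}$, and use the weak continuity of cylinder functions. Hence torsion-freeness holds as an identity in $\VC{\infty}{b}$, and the Koszul manipulation goes through verbatim.
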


\begin{proof}
By $\R$-multilinearity of all the tensors involved, it suffices to show the statement for elementary-tensor cylinder vector fields.
Thus, everywhere in the following let~$V^j\eqdef u^j\otimes w^j\in\VC{\infty}{b}$ for~$j=1,2,3$ be arbitrary if not otherwise specified.

\paragraph{Proof of~\ref{i:t:LCConnection:1}}
Assume~$T^\nabla=0$.
By definition of~$\conn^\nabla$,~\eqref{eq:p:Torsion:0} and~\eqref{eq:ConnectionPullback}, for every~$V,Z\in\VC{\infty}{b}$ and every elementary-tensor $1$-form~$\Omega= u\otimes\omega \in \OC{1}{\infty}{b}$,
\begin{align*}
\tparen{\Omega\,\mbfT^\nabla(V^1,V^2)}_\mu &= \tparen{\Omega\mbfT^\intr(V^1,V^2) + (\boldOmega^\nabla V^2)(V^1,\Omega) - (\boldOmega^\nabla V^1)(V^2,\Omega)}_\mu 
\\
&= \mu\tparen{-\Omega_\mu\pbracket{V^1_\mu}{V^2_\mu}+ \Omega_\mu(\nabla_{V^1_\mu}V^2_\mu)- \Omega_\mu(\nabla_{V^2_\mu}V^1_\mu)}
\\
&= (u^1u^2 u)_\mu \cdot \mu \tparen{-\omega[w^1,w^2]+\omega(\nabla_{w^1}w^2)-\omega(\nabla_{w^2}w^1)}
\\
&= (u^1u^2 u)_\mu \cdot \mu \paren{\omega \tparen{T^\nabla(w^1,w^2)}}
\\
&=0
\end{align*}
by the assumption.

\medskip

Vice versa, assume~$\mbfT^\nabla=0$.
Let~$w^j\in\mfX^\infty$ with~$j=1,2$ and set~$V^j\eqdef \car\otimes w^j$.
By~\eqref{eq:IntrinsicVanishing}, \eqref{eq:ConnectionPullback}, and~\eqref{eq:p:Torsion:0} we have
\[
\mbfT^\nabla(V^1,V^2)= (\boldOmega^\nabla V^2)(V^1) - (\boldOmega^\nabla V^1)(V^2) - \pbracket{V^1}{V^2} = T^\nabla(w^1,w^2) \comma
\]
and the conclusion follows by the assumption.

\paragraph{Proof of~\ref{i:t:LCConnection:2}}
Assume~$\nabla g=0$. Unraveling the definitions of the objects in~\eqref{eq:MetricCompatibleP}, we need to show
\begin{equation}\label{eq:t:LCConnection:1}
\paren{\paren{\mbfD \tparen{\mbfG^\der_\emparg(V^2_\emparg,V^3_\emparg)} }V^1_\emparg}_\mu \overset{?}= \mu\tparen{g \tparen{(\conn_{V^1} V^2)_\mu, V^3_\mu}} + \mu\tparen{g\tparen{V^2_\mu, (\conn_{V^1} V^3)_\mu}} \fstop
\end{equation}
For the left-hand side of~\eqref{eq:t:LCConnection:1},
\begin{align*}
&\paren{\paren{\mbfD_\mu \tparen{\mbfG^\der_\emparg(V^2,V^3)} }V^1}_\mu 
\\
& \qquad = \mu\paren{ \mbfD_\mu\tparen{g(V^2_\emparg,V^3_\emparg)^\trid} V^1_\mu}
\\
& \qquad = \mu\paren{\mbfD_\mu\tparen{u^2 u^3\, g(w^2,w^3)^\trid} V^1_\mu}
\\
& \qquad = u^3_\mu \cdot \mu\tparen{g(w^2,w^3)} \cdot \mu\tparen{(\mbfD_\mu u^2) V^1_\mu} + u^2_\mu \cdot \mu\tparen{g(w^2,w^3)} \cdot \mu\tparen{(\mbfD_\mu u^3) V^1_\mu}
\\
&\qquad\qquad + u^1_\mu u^2_\mu u^3_\mu \cdot \mu\paren{ \diff \tparen{g(w^2,w^3)} w^1}
\\
& \qquad = u^3_\mu \cdot \mu\tparen{g(w^2,w^3)} \cdot \mu\tparen{(\mbfD_\mu u^2) V^1_\mu} + u^2_\mu \cdot \mu\tparen{g(w^2,w^3)} \cdot \mu\tparen{(\mbfD_\mu u^3) V^1_\mu}
\\
&\qquad\qquad + (u^1 u^2 u^3)_\mu \cdot \mu\paren{w^1 g(w^2,w^3)}\comma
\end{align*}
whence, by the assumption in the form~\eqref{eq:MetricCompatibleM}, we conclude
\begin{equation}\label{eq:t:LCConnection:2}
\begin{aligned}
&\paren{\paren{\mbfD_\mu \tparen{\mbfG^\der_\emparg(V^2,V^3)} }V^1}_\mu 
\\
& \qquad = u^3_\mu \cdot \mu\tparen{g(w^2,w^3)} \cdot \mu\tparen{(\mbfD_\mu u^2) V^1_\mu} + u^2_\mu \cdot \mu\tparen{g(w^2,w^3)} \cdot \mu\tparen{(\mbfD_\mu u^3) V^1_\mu}
\\
&\qquad\qquad + (u^1 u^2 u^3)_\mu \cdot \mu\paren{g(\nabla_{w^1}w^2,w^3)} + (u^1 u^2 u^3)_\mu \cdot \mu\paren{g(w^2,\nabla_{w^1} w^3)}\fstop
\end{aligned}
\end{equation}

For the right-hand side of~\eqref{eq:t:LCConnection:1},
\begin{align*}
\mu \paren{g \tparen{(\conn_{V^1} V^2)_\mu, V^3_\mu}} &= u^3_\mu\cdot \mu\tparen{(\mbfD_\mu u^2) V^1_\mu} \cdot \mu\paren{g\paren{w^2, w^3}} 
+ u^3_\mu (u^1u^2)_\mu \cdot \mu\tparen{g(\nabla_{w^1}w^2,w^3)} \comma
\\
\mu \paren{g \tparen{(\conn_{V^1} V^3)_\mu, V^2_\mu}} &= u^2_\mu\cdot \mu\tparen{(\mbfD_\mu u^3) V^1_\mu} \cdot \mu\paren{g\paren{w^3, w^2}}
+u^2_\mu (u^1u^3)_\mu \cdot \mu\tparen{g(w^2,\nabla_{w^1}w^3)} \comma
\end{align*}
and the conclusion follows by comparison with~\eqref{eq:t:LCConnection:2}.

\medskip

Vice versa, assume~$\conn^\nabla\mbfG^\der=0$, i.e.~\eqref{eq:MetricCompatibleP} holds.
Let~$w^j\in\mfX^\infty$ with~$j=1,2$ and set~$V^j\eqdef \car\otimes w^j$.
Applying~\eqref{eq:IntrinsicVanishing},~\eqref{eq:ConnectionPullback}, and~\eqref{eq:Metric} to~\eqref{eq:MetricCompatibleP} we have, for every~$\mu\in\msP$,
\[
\mu\tparen{V^1 g(w^2,w^3)^\trid} = \mu\tparen{g (\nabla_{w^1}w^2, w^3) } +  \mu\tparen{g (w^2, \nabla_{w^1}w^3) }\fstop
\]
Since
\[
\mu\tparen{V^1 g(w^2,w^3)^\trid} = \mu\paren{\mbfD \tparen{g(w^2,w^3)^\trid} V^1} = \mu\tparen{\diff \tparen{g(w^2,w^3)} w^1} = \mu\tparen{w^1 g(w^2,w^3) } \comma
\]
we have
\[
0= \mu\tparen{w^1 g(w^2,w^3) - g (\nabla_{w^1}w^2, w^3) - g (w^2, \nabla_{w^1}w^3) } \comma
\]
and the conclusion follows since~$\mu\in\msP$ is arbitrary.

\paragraph{Proof of~\ref{i:t:LCConnection:3}}
Follows by combining~\ref{i:t:LCConnection:1} and~\ref{i:t:LCConnection:2}.

\paragraph{Proof of uniqueness}
The proof of uniqueness of the Levi-Civita connection on a finite-dimensional Riemannian manifold in~\cite[Thm.~5.10, p.~122]{Lee18} is purely algebraic, thus it also applies to our setting \emph{mutatis mutandis} in light of the non-degeneracy of~$\mbfG^\der$ on~$\VC{\infty}{b}$.
\end{proof}

\begin{remark}[Comparison with~\cite{Lot07}]\label{r:RemarkLott2}
Our definition of the Levi-Civita connection extends the one of~\cite{Lot07}, i.e.\ the former coincides with the latter when restricted to constant gradient-type vector fields and to measures~$\mu\in\msP^\infty$.
\begin{proof}
Let~$F=f^\trid$ and~$\mu=\rho\vol_g$.
For every~$\phi_1,\phi_2\in\Cinfty$, and setting for notational simplicity~$w^k\eqdef \nabla \phi_k$ for~$k=1,2$, we have, in the notation of~\cite{Lot07},
\begin{align*}
\tag*{\cite[(4.4)]{Lot07}}
&\tparen{\overline\nabla_{V_{\phi_1}} V_{\phi_2} F}_{\rho\vol_g}
\\
\nonumber
&\qquad = \diff_\eps\restr_{\eps=0} F\paren{\rho\,\vol_g - \eps \nabla_i \paren{\rho \nabla_j \phi_1\, \nabla^i\nabla^j\phi_2} \vol_g}
\\
\nonumber
&\qquad= -\int f\, \nabla_i \paren{\rho \nabla_j\phi_1\, \nabla^i\nabla^j\phi_2}\dvol_g
\\
\nonumber
&\qquad= \int \nabla_i f\, \nabla_j\phi_1\, \nabla^i\nabla^j \phi_2\, \rho\, \dvol_g
\\
\nonumber
&\qquad= \int \Hess_g(\phi_2)(\nabla f, w^1)\, \rho\, \dvol_g
\\
&\qquad= \int \paren{w^1\tparen{\nabla f (\phi_2)} - \tparen{\nabla^\lc_{w^1} \nabla f}(\phi_2)} \rho\, \dvol_g
\\
&\qquad= \int \paren{w^1g\tparen{\nabla f, w^2} - g\tparen{\nabla\phi_2, \nabla^\lc_{w^1} \nabla f}} \rho\, \dvol_g
\\
&\qquad= \int \paren{g\tparen{\nabla^\lc_{w^1}\nabla f, w^2} + g\tparen{\nabla f, \nabla^\lc_{w^1}w^2} - g\tparen{w^2, \nabla^\lc_{w^1} \nabla f}} \rho\, \dvol_g
\\
&\qquad= \int \paren{(\nabla^\lc_{w^1}w^2) f} \rho\, \dvol_g\comma
\end{align*}
where we used~\eqref{eq:MetricCompatibleM}.
By the chain rule, the above computation extends to every~$F\in\FC{\infty}{b}$.
Thus, Lott's connection~$\overline\nabla$ satisfies, for every~$\phi_1,\phi_2\in\Cinfty$ and~$F\in \FC{\infty}{b}$
\begin{align}
\tparen{\overline\nabla_{V_{\phi_1}} V_{\phi_2} F}_{\rho\vol_g} = \tparen{\tparen{(\boldOmega^\nabla V_{\phi_2}) V_{\phi_1}} F}_{\rho\vol_g} 
\fstop
\end{align}
We conclude that
\begin{equation}\label{r:LottConnection}
\overline\nabla_{V_{\phi_1}} V_{\phi_2} = (\boldOmega^\nabla V_{\phi_2}) V_{\phi_1}\comma
\end{equation}
and the assertion follows from~\eqref{eq:IntrinsicVanishing}.
\end{proof}
\end{remark}

We note that~\eqref{r:LottConnection} greatly simplifies the expression for the connection on constant fields given in~\cite[Lem.~4]{Lot07}, especially in that it makes the resulting constant field \emph{explicitly} independent from the application point~$\rho\vol_g$.
This does not contradict the fact that $\Pi\VC{\infty}{b}$-valued Lie brackets of constant vector fields are non-constant~\cite[p.~3, after Eqn.~(7)]{DinFanLi25}, since the equality above is always interpreted for vector fields acting on cylinder functions, and therefore modulo~$\ker\div_\mu$.

\subsubsection{Riemann tensor}
We proceed to compute the Riemann tensor
\begin{equation}\label{eq:RiemannTensor}
\mbfR^\der(V^1,V^2)V^3 \eqdef \conn^\lc_{V^1} \conn^\lc_{V^2} V^3 -  \conn^\lc_{V^2} \conn^\lc_{V^1} V^3 - \conn^\lc_{\bbracket{V^1}{V^2}}V^3 \fstop
\end{equation}

In light of the product rule for~$\conn^\lc$ (cfr.\ Proposition~\ref{p:Connection}) and of the anchor property for the Lie bracket of vector fields (see Corollary~\ref{c:LieAlgebroid}), that the expression in~\eqref{eq:RiemannTensor} is indeed tensorial follows from the algebraic proof for the standard Riemann tensor, e.g.~\cite[Prop.~7.3, p.~196]{Lee18}.

\begin{theorem}\label{t:RiemannTensor}
The Riemann tensor~\eqref{eq:RiemannTensor} satisfies
\begin{equation}\label{eq:t:RiemannTensor:0}
\mbfR^\der(V^1,V^2)V^3 = R(V^1_\emparg,V^2_\emparg)V^3_\emparg \fstop
\end{equation}

\begin{proof}
We have
\begin{align}
\mbfR^\der&(V^1,V^2)V^3
\\
\label{eq:t:Riemann:1}
&=\conn^\intr_{V^1} \conn^\intr_{V^2} V^3 - \conn^\intr_{V^2} \conn^\intr_{V^1} V^3 - \conn^\intr_{\bbracket{V^1}{V^2}}V^3
\\
\label{eq:t:Riemann:2}
&\qquad\begin{aligned}&+\tparen{\boldOmega^\lc\tparen{(\boldOmega^\lc V^3) V^2}}V^1 - \tparen{\boldOmega^\lc \tparen{(\boldOmega^\lc V^3) V^1}}V^2
\\
&\qquad - \tparen{\boldOmega^\lc V^3}\bbracket{V^1}{V^2}\end{aligned}
\\
\label{eq:t:Riemann:3}
&\qquad+ \conn^\intr_{V^1}\tparen{(\boldOmega^\lc V^3) V^2} - \conn^\intr_{V^2}\tparen{(\boldOmega^\lc V^3) V^1}
\\
\label{eq:t:Riemann:4}
&\qquad+ \tparen{\boldOmega^\lc \tparen{\conn^\intr_{V^2} V^3}}V^1 - \tparen{\boldOmega^\lc \tparen{\conn^\intr_{V^1} V^3}}V^2\fstop
\end{align}

By $\R$-multilinearity of all the operators involved, it suffices to compute the Riemann tensor on arbitrary elementary-tensor cylinder vector fields $V^j\eqdef u^j\otimes w^j\in\VC{\infty}{b}$, with~$j=1,2,3$.

\paragraph{Intrinsic terms}
Letting~$u^3= F^3\circ\mbff^{3\trid} \in\FC{\infty}{b}$,
\begin{align*}
&\conn^\intr_{V^1} \conn^\intr_{V^2} V^3 
\\
&\qquad = \paren{\mbfD \paren{ \tparen{(\mbfD_\emparg u^3) V^2_\emparg}^\trid} V^1_\emparg} \otimes w^3
\\
&\qquad =\tparen{(\mbfD u^2) V^1}^\trid \cdot \tparen{(\mbfD u^3) w^2}^\trid \otimes w^3
\\
&\qquad\qquad + \sum_{i,k} (\partial^2_{ik} F^3) \circ \mbff^{3\trid} \cdot \tparen{(\diff f_k)V^1_\emparg}^\trid \cdot \tparen{(\diff f_i)V^2_\emparg}^\trid \otimes w^3
\\
&\qquad\qquad +\sum_i (\partial_i F^3)\circ\mbff^{3\trid} \cdot \paren{\paren{\diff\tparen{(\diff f^3_i) V^2_\emparg}} (V^1_\emparg)}^\trid \otimes w^3
\\
&\qquad = \paren{\tparen{(\mbfD u^2) V^1}^\trid \cdot \tparen{(\mbfD u^3) w^2}^\trid + (\mbfD^\sym{2} u^3)(V^1_\emparg,V^2_\emparg) + \tparen{V^1_\emparg \tparen{(\mbfD_\emparg u^3)V^2_\emparg}}^\trid} \otimes w^3 \fstop
\end{align*}

Thus, on the one hand,
\begin{align*}
\nonumber
&\conn^\intr_{V^1} \conn^\intr_{V^2} V^3 -  \conn^\intr_{V^2} \conn^\intr_{V^1} V^3
\\
&\qquad = \paren{\tparen{(\mbfD u^2) V^1}^\trid \cdot \tparen{(\mbfD u^3) w^2}^\trid - \tparen{(\mbfD u^1) V^2}^\trid \cdot \tparen{(\mbfD u^3) w^1}^\trid}\otimes w^3
\\
&\qquad\qquad + \paren{\tparen{V^1_\emparg \tparen{(\mbfD_\emparg u^3)V^2_\emparg}}^\trid - \tparen{V^2_\emparg \tparen{(\mbfD_\emparg u^3)V^1_\emparg}}^\trid} \otimes w^3
\\
&\qquad = \paren{\tparen{(\mbfD u^2) V^1}^\trid \cdot \tparen{(\mbfD u^3) w^2}^\trid - \tparen{(\mbfD u^1) V^2}^\trid \cdot \tparen{(\mbfD u^3) w^1}^\trid}\otimes w^3
\\
&\qquad\qquad+ u^1u^2\paren{\tparen{w^1 \tparen{(\mbfD_\emparg u^3)w^2}}^\trid - \tparen{w^2 \tparen{(\mbfD_\emparg u^3)w^1}}^\trid} \otimes w^3
\\
&\qquad = \paren{\tparen{(\mbfD u^2) V^1}^\trid \cdot \tparen{(\mbfD u^3) w^2}^\trid - \tparen{(\mbfD u^1) V^2}^\trid \cdot \tparen{(\mbfD u^3) w^1}^\trid}\otimes w^3
\\
&\qquad\qquad+ u^1u^2\paren{\paren{w^1 \tparen{(\mbfD_\emparg u^3)w^2} - w^2 \tparen{(\mbfD_\emparg u^3)w^1}}^\trid} \otimes w^3
\\
&\qquad = \paren{\tparen{(\mbfD u^2) V^1}^\trid \cdot \tparen{(\mbfD u^3) w^2}^\trid - \tparen{(\mbfD u^1) V^2}^\trid \cdot \tparen{(\mbfD u^3) w^1}^\trid}\otimes w^3
\\
&\qquad\qquad+ u^1u^2\paren{\paren{(\mbfD_\emparg u^3)[w^1,w^2]}^\trid} \otimes w^3
\\
&\qquad = \paren{\tparen{(\mbfD u^2) V^1}^\trid \cdot \tparen{(\mbfD u^3) w^2}^\trid - \tparen{(\mbfD u^1) V^2}^\trid \cdot \tparen{(\mbfD u^3) w^1}^\trid}\otimes w^3
\\
&\qquad\qquad+ {\paren{(\mbfD_\emparg u^3)\pbracket{V^1}{V^2}_\emparg}^\trid} \otimes w^3
\\
&\qquad = \paren{\tparen{(\mbfD u^2) V^1}^\trid \cdot \tparen{(\mbfD u^3) w^2}^\trid - \tparen{(\mbfD u^1) V^2}^\trid \cdot \tparen{(\mbfD u^3) w^1}^\trid}\otimes w^3
\\
&\qquad\qquad+ \tparen{\pbracket{V^1}{V^2} u^3} \otimes w^3
\\
&\qquad = \tparen{\bbracket{V^1}{V^2}u^3} \otimes w^3
\end{align*}
by~\eqref{eq:DifferentialBracket} and~\eqref{eq:ExplicitBracket}.
On the other hand,
\begin{align*}
\conn^\intr_{\bbracket{V^1}{V^2}}V^3 &= \tparen{(\mbfD_\emparg u^3) \bbracket{V^1}{V^2}_\emparg}^\trid \otimes w^3 = \tparen{\bbracket{V^1}{V^2}u^3} \otimes w^3\fstop
\end{align*}
Comparing the two expressions we see that~\eqref{eq:t:Riemann:1} vanishes.

\paragraph{Pullback terms}
As for the terms in~\eqref{eq:t:Riemann:2}, by~\eqref{eq:ExplicitBracket} we have
\begin{align}
\nonumber
&\hspace{-2em}\tparen{\boldOmega^\lc\tparen{(\boldOmega^\lc V^3) V^2}}V^1 - \tparen{\boldOmega^\lc \tparen{(\boldOmega^\lc V^3) V^1}}V^2
- \tparen{\boldOmega^\lc V^3}\bbracket{V^1}{V^2}
\\
\nonumber
&= \nabla^\lc_{V^1_\emparg} \nabla^\lc_{V^2_\emparg} V^3_\emparg - \nabla^\lc_{V^2_\emparg} \nabla^\lc_{V^1_\emparg} V^3_\emparg - \nabla^\lc_{\bbracket{V_1}{V_2}_\emparg} V^3_\emparg
\\
\nonumber
&= u^1u^2u^3 \otimes \nabla^\lc_{w^1}\nabla^\lc_{w^2}w^3 - u^1u^2u^3 \otimes \nabla^\lc_{w^2}\nabla^\lc_{w^1}w^3 
\\
\nonumber
&\qquad - \nabla^\lc_{(\conn^\intr_{V^1}V^2)_\emparg-(\conn^\intr_{V^2}V^1)_\emparg + \pbracket{V^1}{V^2}_\emparg} V^3_\emparg
\\
\nonumber
&= u^1u^2u^3 \otimes \tparen{\nabla^\lc_{w^1}\nabla^\lc_{w^2}w^3 - \nabla^\lc_{w^2}\nabla^\lc_{w^1}w^3 }
\\
\nonumber
&\qquad - \nabla^\lc_{(\conn^\intr_{V^1}V^2)_\emparg} V^3 + \nabla^\lc_{(\conn^\intr_{V^2}V^1)_\emparg} V^3 -\nabla^\lc_{\pbracket{V^1}{V^2}_\emparg} V^3_\emparg
\\
\nonumber
&= u^1u^2u^3 \otimes \tparen{\nabla^\lc_{w^1}\nabla^\lc_{w^2}w^3 - \nabla^\lc_{w^2}\nabla^\lc_{w^1}w^3 }
\\
\nonumber
&\qquad - u^3\otimes \nabla^\lc_{(V^1 u^2)_\emparg w^2} w^3 + u^3\otimes \nabla^\lc_{(V^2 u^1)_\emparg w^1} w^3 - u^3\otimes\nabla^\lc_{[V^1_\emparg,V^2_\emparg]} w^3
\\
\nonumber
&= u^1u^2u^3 \otimes \tparen{\nabla^\lc_{w^1}\nabla^\lc_{w^2}w^3 - \nabla^\lc_{w^2}\nabla^\lc_{w^1}w^3 } - u^3\otimes\nabla^\lc_{u^1_\emparg u^2_\emparg [w^1,w^2]} w^3
\\
\nonumber
&\qquad - u^3\otimes \nabla^\lc_{u^1_\emparg\paren{(\mbfD_\emparg u^2) w^1}^\trid w^2} w^3 + u^3\otimes \nabla^\lc_{u^2_\emparg\paren{(\mbfD_\emparg u^1) w^2}^\trid w^1} w^3
\\
\nonumber
&= u^1u^2u^3 \otimes \tparen{\nabla^\lc_{w^1}\nabla^\lc_{w^2}w^3 - \nabla^\lc_{w^2}\nabla^\lc_{w^1}w^3 -\nabla^\lc_{[w^1,w^2]} w^3}
\\
\nonumber
&\qquad - u^1u^3 \tparen{(\mbfD_\emparg u^2)w^1}^\trid \otimes \nabla^\lc_{w^2}w^3+ u^2u^3\tparen{(\mbfD_\emparg u^1)w^2}^\trid \otimes \nabla^\lc_{w^1}w^3
\\
\nonumber
&= u^1u^2u^3 \otimes \tparen{R(w^1,w^2)w^3}
\\
\label{eq:t:Riemann:17}
&\qquad - u^1u^3 \tparen{(\mbfD_\emparg u^2)w^1}^\trid \otimes \nabla^\lc_{w^2}w^3+ u^2u^3\tparen{(\mbfD_\emparg u^1)w^2}^\trid \otimes \nabla^\lc_{w^1}w^3 \fstop
\end{align}

\paragraph{Mixed terms}
For the first term in~\eqref{eq:t:Riemann:3}, we have
\begin{align*}
\conn^\intr_{V^1}\tparen{(\boldOmega^\lc V^3)V^2} &= \conn^\intr_{V^1} (u^2u^3\otimes \nabla^\lc_{w^2} w^3) = \paren{\tparen{\mbfD_\emparg (u^2u^3)} V^1_\emparg}^\trid \otimes \nabla^\lc_{w^2}w^3
\\
&= u^1u^2 \paren{\tparen{\mbfD_\emparg u^3} w^1}^\trid \otimes \nabla^\lc_{w^2}w^3
\\
&\qquad + u^1u^3 \paren{\tparen{\mbfD_\emparg u^2} w^1}^\trid \otimes \nabla^\lc_{w^2}w^3 \fstop
\end{align*}

For the first term in~\eqref{eq:t:Riemann:4}, we have
\begin{align*}
\tparen{\boldOmega^\lc(\conn^\intr_{V^2}V^3)}V^1 &= \paren{\boldOmega^\lc\tparen{\tparen{(\mbfD_\emparg u^3)V^2_\emparg}^\trid\otimes w^3}} V^1= \tparen{(\mbfD_\emparg u^3)V^2_\emparg}^\trid \otimes \nabla^\lc_{V^1_\emparg} w^3
\\
&= u^1u^2 \tparen{(\mbfD_\emparg u^3)w^2_\emparg}^\trid \otimes \nabla^\lc_{w^1} w^3 \fstop
\end{align*}

Thus, combining all the necessary terms \emph{mutatis mutandis} we have
\begin{align*}
\eqref{eq:t:Riemann:3}+\eqref{eq:t:Riemann:4} + \eqref{eq:t:Riemann:17} = 0 \comma
\end{align*}
and the conclusion follows.
\end{proof}
\end{theorem}

Using the expression~\eqref{eq:t:RiemannTensor:0} for the Riemann tensor we may compute the sectional curvature at every pair of $\mbfG^\der_\mu$-orthogonal unit tangent vectors.

\begin{corollary}
Fix~$\mu\in\msP$.
Let~$w^1,w^2\in\mfX^\infty$ with~$\ttnorm{w^1}_\mu=\ttnorm{w^2}_\mu =1$ and~$\ttscalar{w^1}{w^2}_\mu=0$, and set~$V^j\eqdef \car\otimes w^j$ for~$j=1,2$.
Then, the sectional curvature~$\mbfK^\der_\mu$ of~$\msP$ at~$\mu$ in the plane spanned by~$V^1,V^2$ satisfies
\[
\mbfK^\der_\mu(V^1,V^2) = \int_M K(w^1,w^2) \paren{\ttabs{w^1}_g^2\, \ttabs{w^2}_g^2-g(w^1,w^2)^2} \diff\mu \fstop
\]
\end{corollary}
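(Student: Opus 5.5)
The plan is to compute the sectional curvature directly from its definition, using the formula of Theorem~\ref{t:RiemannTensor}. Since $V^1,V^2$ are $\mbfG^\der_\mu$-orthonormal, the sectional curvature on the plane they span is
\[
\mbfK^\der_\mu(V^1,V^2)=\mbfG^\der_\mu\tparen{\mbfR^\der(V^1,V^2)V^2,V^1}.
\]
The sign convention should be the one matching~\eqref{eq:RiemannTensor} and the convention for $K$ on $M$. No denominator appears, because orthonormality makes $\mbfG^\der_\mu(V^1,V^1)\mbfG^\der_\mu(V^2,V^2)-\mbfG^\der_\mu(V^1,V^2)^2=1$.

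\textbf{Step 1 (reduce to $M$).} By~\eqref{eq:t:RiemannTensor:0}, for the constant fields $V^j=\car\otimes w^j$ we have $\mbfR^\der(V^1,V^2)V^2=\car\otimes R(w^1,w^2)w^2$. Then, by the definition~\eqref{eq:Metric} of the extended metric,
\[
\mbfK^\der_\mu(V^1,V^2)=\mu\paren{g\tparen{R(w^1,w^2)w^2,w^1}}=\int_M \mathrm{Rm}(w^1,w^2,w^2,w^1)\diff\mu .
\]

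\textbf{Step 2 (pointwise rewriting).} Apply, at each $x\in M$, the standard finite-dimensional identity
\[
\mathrm{Rm}_x(w^1,w^2,w^2,w^1)=K_x(w^1_x,w^2_x)\paren{\ttabs{w^1}_g^2\ttabs{w^2}_g^2-g(w^1,w^2)^2}(x).
\]
This holds wherever $w^1_x,w^2_x$ are linearly independent; there $K_x(w^1_x,w^2_x)$ is the sectional curvature of the plane they span. Integrating against $\mu$ gives the claimed formula.

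\textbf{Main obstacle.} The only delicate point is the set $N$ where $w^1_x,w^2_x$ are linearly dependent. There $K(w^1,w^2)$ is undefined, but the Gram factor vanishes, and so does $\mathrm{Rm}(w^1,w^2,w^2,w^1)$ by antisymmetry. I would therefore read the integrand as $0$ on $N$ (the convention $K\cdot 0=0$), and state this explicitly so the formula is meaningful for every $\mu\in\msP$. Note that the orthonormality of $V^1,V^2$ is a condition in $L^2_\mu$ and does not imply pointwise orthogonality. This is exactly why the Gram factor $\ttabs{w^1}^2\ttabs{w^2}^2-g(w^1,w^2)^2$ remains inside the integral instead of reducing to $1$.

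Finally, I would check that the sign convention for $R$ implicit in~\eqref{eq:RiemannTensor} agrees with the one used for $K$ on $M$, so that no spurious minus sign appears.
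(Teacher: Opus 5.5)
Your proposal is correct and follows the route the paper indicates. The paper gives no separate proof; it only says that the corollary follows by inserting~\eqref{eq:t:RiemannTensor:0} into the definition of sectional curvature, which is what you do. For constant fields you get $\mbfR^\der(V^1,V^2)V^2=\car\otimes R(w^1,w^2)w^2$, the $\mbfG^\der_\mu$-orthonormality makes the denominator equal to $1$, and the pointwise identity on $M$ gives the integrand. Two points you make explicit are useful clarifications of what the paper leaves implicit: that the integrand is read as $0$ where $w^1_x,w^2_x$ are linearly dependent, and that the Gram factor stays inside the integral because orthonormality holds only in $L^2_\mu$.
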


The corollary shows that if~$M$ has non-negative sectional curvature, then so does~$\msP$.
Furthermore, all considerations in~\cite[\S5, Rmk.~3, p.~434]{Lot07} concerning lower bounds for the sectional curvature of~$\msP$ remain valid.

\begin{remark}[Comparison with~{\cite{Lot07,Stu26}}]
The corollary should be compared with the analogous statement~\cite[Cor.~1]{Lot07}, very recently rederived in a rigorous metric way in~\cite{Stu26}.
In both these references, an additional correction term appears, accounting for the correction terms in the `reduced Riemann tensor'~$\mbfR^\grad$ which we address thoroughly in the next section,~\S\ref{ss:RiemannGrad}.
\end{remark}

We denote again by~$\conn^\lc$ the connection on every bundle associated to~$\VC{\infty}{b}$ induced by the Levi-Civita connection~$\conn^\lc$ on~$\VC{\infty}{b}$.

\begin{proposition}[Symmetries and Bianchi identities]\label{p:RiemannIdentities}
The Riemann tensor satisfies the \emph{symmetry conditions}
\begin{align}
\label{eq:p:RiemannIdentities:0}
\mbfR^\der(V^1,V^2) &= -\mbfR^\der(V^2,V^1) \comma
\\
\label{eq:p:RiemannIdentities:1}
\mbfG^\der\tparen{\mbfR^\der(V^1,V^2)V^3,V^4} &= - \mbfG^\der \tparen{\mbfR^\der(V^1, V^2)V^4,V^3} \comma
\\
\label{eq:p:RiemannIdentities:2}
\mbfG^\der\tparen{\mbfR^\der(V^1,V^2)V^3,V^4} &= \mbfG^\der\tparen{\mbfR^\der(V^3,V^4)V^1,V^2} \comma
\end{align}
and the \emph{Bianchi identities}
\begin{align}
\label{eq:p:RiemannIdentities:3}
\mbfR^\der(V^1,V^2)V^3 + \mbfR^\der(V^2,V^3)V^1 + \mbfR^\der(V^3,V^1)V^2 &= 0\comma
\\
\label{eq:p:RiemannIdentities:4}
(\conn^\lc_{V^1}\mbfR^\der)(V^2,V^3) + (\conn^\lc_{V^2}\mbfR^\der)(V^3,V^1) + (\conn^\lc_{V^3}\mbfR^\der)(V^1,V^2) &= 0 \fstop 
\end{align}

\begin{proof}
The proofs of~\eqref{eq:p:RiemannIdentities:0}-\eqref{eq:p:RiemannIdentities:3} follow from the analogous properties of the Riemann tensor~$R$ on~$M$.
The differential Bianchi identity may be proved by adapting an algebraic proof of the same identity for~$R$, see e.g.~\cite[\S2.3.1, Prop.~4, pp.~33f.]{Pet06}.
\end{proof}
\end{proposition}

\subsection{Reduced Riemann tensor}\label{ss:RiemannGrad}
Here, we compute the `reduced' Riemann tensor~$\mbfR^\grad$ and show that it coincides with the one derived in~\cite{Lot07}.
Throughout it will be convenient to (heuristically) regard~$\VC{\infty}{b,\grad}$ as an isometrically embedded subbundle of~$\VC{\infty}{b}$ with orthogonal projection~$\Pi$ onto the \emph{gradient-type subbundle}.
While our computations are rigorous, the understanding of some operators as the infinite-dimensional counterparts to standard ones in Riemannian geometry is heuristic in that they do not take values in the right spaces; in particular, they do not preserve smoothness.
We comment precisely on this fact in Remarks~\ref{r:Heur1} and~\ref{r:Heur2}.

\subsubsection{Levi-Civita connection of the gradient-type subbundle}
Denote by~$\conn^\grad\eqdef \Pi\conn^\lc$ the projection to~$\VC{\infty}{b,\grad}$ of the Levi-Civita connection~$\conn^\lc$ for~$\mbfG^\der$.

\begin{lemma}\label{l:LeviCivitaGrad}
$\conn^\grad$ can be heuristically regarded as the Levi-Civita connection for~$\mbfG^\grad$ on~$\VC{\infty}{b,\grad}$.
\end{lemma}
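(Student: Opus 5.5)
The plan is to follow the standard argument for submanifolds: the Levi-Civita connection of an isometrically embedded submanifold is the tangential projection of the ambient one. I will verify the two defining properties (product rule, metric compatibility, torsion-freeness) for $\conn^\grad=\Pi\conn^\lc$ on $\VC{\infty}{b,\grad}$ with respect to $\mbfG^\grad$. Uniqueness then follows as in the proof of Theorem~\ref{t:LCConnection}, using the algebraic Koszul argument of~\cite{Lee18}. The word \emph{heuristically} enters only because $\Pi$ does not preserve smoothness, so $\Pi\conn^\lc_V Z$ need not lie in $\VC{\infty}{b,\grad}$. I will interpret every identity pointwise at $\mu$, acting on cylinder functions or paired with $\mbfG^\der_\mu$, where all terms are well defined.

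\emph{Product rule.} Since $\Pi_\mu$ is linear and $v_\mu$ is a scalar, applying $\Pi$ to~\eqref{eq:p:Connection:0} for $\conn^\lc$ gives
\[
\conn^\grad_V(vZ)=v\,\conn^\grad_V Z+(Vv)\,\Pi Z.
\]
For $Z\in\VC{\infty}{b,\grad}$ we have $\Pi Z=Z$, so this is the product rule.

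\emph{Metric compatibility.} Take $V^1,V^2,V^3\in\VC{\infty}{b,\grad}$. There $\mbfG^\grad=\mbfG^\der$, so Theorem~\ref{t:LCConnection}\ref{i:t:LCConnection:2} gives
\[
V^1\mbfG^\grad(V^2,V^3)=\mbfG^\der(\conn^\lc_{V^1}V^2,V^3)+\mbfG^\der(V^2,\conn^\lc_{V^1}V^3).
\]
Because $V^3_\mu\in T^\grad_\mu\msP$ and $\Pi_\mu$ is the orthogonal projection, $\mbfG^\der_\mu(X,V^3)=\mbfG^\der_\mu(\Pi_\mu X,V^3)$. The right-hand side therefore equals
\[
\mbfG^\grad(\conn^\grad_{V^1}V^2,V^3)+\mbfG^\grad(V^2,\conn^\grad_{V^1}V^3).
\]

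\emph{Torsion.} By Theorem~\ref{t:LCConnection}\ref{i:t:LCConnection:1}, $\conn^\lc_VZ-\conn^\lc_ZV=\bbracket{V}{Z}$. Projecting gives
\[
\conn^\grad_VZ-\conn^\grad_ZV=\Pi\bbracket{V}{Z}.
\]
By~\eqref{eq:KernelPi}, $\Pi\bbracket{V}{Z}$ and $\bbracket{V}{Z}$ act identically on every $u\in\FC{\infty}{b}$. So, as derivations on $\FC{\infty}{b}$ (the sense in which the reduced bracket of~\cite{DinFan21,Lot07} is defined), the torsion of $\conn^\grad$ vanishes.

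I expect the torsion step to be the main obstacle. It requires that the bracket $\bbracket{V}{Z}$ of two gradient-type cylinder fields, seen as an element of the gradient bundle, be identified with $\Pi\bbracket{V}{Z}$, which is generally not smooth. I would handle this by declaring the torsion of $\conn^\grad$ to be tested only against $\mbfD u$, and using~\eqref{eq:KernelPi} together with~\eqref{eq:ExplicitBracket}. I will flag this, and the non-smoothness of $\Pi\conn^\lc$, as the content of the heuristic qualification (to be made precise in Remarks~\ref{r:Heur1} and~\ref{r:Heur2}).

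Finally, for uniqueness: any other $\mbfG^\grad$-compatible torsion-free connection satisfies the same Koszul formula on $\VC{\infty}{b,\grad}$ tested against $\VC{\infty}{b,\grad}$. Non-degeneracy of $\mbfG^\grad_\mu$ on the $L^2_\mu$-closure $T^\grad_\mu\msP$ then forces equality with $\conn^\grad$ at every $\mu$.
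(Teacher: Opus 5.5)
Your proposal is correct and follows essentially the paper's route: metric compatibility comes from pairing the $\mbfG^\der$-compatibility of $\conn^\lc$ with gradient-type fields, so that the orthogonal projection $\Pi$ is absorbed, and torsion-freeness comes from projecting and testing only against differentials of cylinder functions, where $\Pi$ is again invisible by~\eqref{eq:KernelPi}. The differences are minor: you invoke Theorem~\ref{t:LCConnection}\ref{i:t:LCConnection:1} directly, whereas the paper redoes that computation via $\Pi\mbfT^\intr=\mbfT^\intr$ and the vanishing torsion of $\nabla^\lc$, and you also verify the product rule and sketch a Koszul-type uniqueness argument, neither of which the paper spells out in this proof.
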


\begin{remark}\label{r:Heur1}
We stress that~$\conn^\grad$ is not, in general,~$\VC{\infty}{b,\grad}$-valued, due to the discontinuity of~$\Pi^\perp_\mu$ and to the fact that~$\Pi^\perp_\mu$ does not preserve smoothness unless~$\mu\in\msP^\infty$.
In the following, we will therefore interpret~$\conn^\grad$ pointwise.
This interpretation is sufficient for our purposes since:
\begin{itemize}
\item the metric tensor~$\mbfG^\der_\mu$ naturally extends from~$\mfX^\infty$ to~$\mfX_\mu$; 
\item the metric tensor~$\mbfG^\grad_\mu$ naturally extends from~$\mfX^\infty_\grad$ to its~$\mfX_\mu$-closure~$T^\grad_\mu\msP$;
\item before taking the projection, the connection~$\conn^\lc$ is a true connection since~$\conn^\intr$ is so by Proposition~\ref{p:Connection} and by tensoriality of~$\boldOmega^\lc$.
Thus, since the projection~$\Pi$ is reabsorbed whenever~$\conn^\grad$ is paired under~$\mbfG^\der$ with a gradient-type vector field, $\conn^\grad$ behaves as the connection~$\conn^\lc$ everywhere on~$\VC{\infty}{b,\grad}$.
\end{itemize}
\end{remark}

\begin{proof}[Proof of Lemma~\ref{l:LeviCivitaGrad}]
According to the heuristic understanding of~$\Pi\conn^\lc$ as a connection on~$\VC{\infty}{b,\grad}$, it suffices to verify $\mbfG^\grad$-metric compatibility and torsion-freeness.
These are pointwise properties, which can be verified pointwise for~$\conn^\lc$ and subsequently projected, as we now show.

\paragraph{$\mbfG^\grad$-metric compatibility}
Fix~$V^1,\dotsc, V^3\in\VC{\infty}{b,\grad}$.
From~\eqref{eq:MetricCompatibleP}, and recalling that~$\Pi V=V$ for every~$V\in\VC{\infty}{b,\grad}$, we have
\[
V^1\mbfG^\der(\Pi V^2,\Pi V^3) = \mbfG^\der(\conn^\lc_{V^1} V^2,\Pi V^3) + \mbfG^\der(\Pi V^2, \conn^\lc_{V^1} V^3)\comma
\]
and therefore, since~$\Pi$ is an orthogonal projection, and by definition of~$\mbfG^\grad$,
\[
V^1\mbfG^\grad(V^2,V^3) = \mbfG^\grad(\Pi \conn^\lc_{V^1} V^2, V^3) + \mbfG^\grad(V^2, \Pi \conn^\lc_{V^1} V^3)\fstop
\]

\paragraph{Torsion-freeness}
By~\eqref{eq:r:IntrConnectionModKerDiv} we have, for every~$V^1,V^2\in\VC{\infty}{b,\grad}$,
\[
\Pi \mbfT^\intr(V^1,V^2) = \mbfT^\intr(\Pi V^1, \Pi V^2) = \mbfT^\intr(V^1,V^2) \fstop
\]
Combining this with the definition of~$\conn^\lc$,~\eqref{eq:p:Torsion:0} and~\eqref{eq:ConnectionPullback}, we have, for every~$V,Z\in\VC{\infty}{b,\grad}$ and every elementary-tensor $1$-form~$\Omega= u\otimes\omega \in \diff\OC{0}{\infty}{b}$,
\begin{align*}
\tparen{\Omega\,\mbfT^{\Pi\nabla^\lc}(V^1,V^2)}_\mu &= \tparen{\Omega\Pi \mbfT^\intr(V^1,V^2) + \Omega\Pi\tparen{(\boldOmega^\lc V^2)V^1} - \Omega\Pi\tparen{(\boldOmega^\lc V^1)V^2}}_\mu 
\\
&= \mu\tparen{\Omega_\mu\Pi_\mu \tparen{-\pbracket{V^1_\mu}{V^2_\mu}+ (\nabla^\lc_{V^1_\mu}V^2_\mu)- (\nabla^\lc_{V^2_\mu}V^1_\mu)}}
\\
&= (u^1u^2 u)_\mu \cdot \mu \tparen{\omega \Pi_\mu \tparen{-[w^1,w^2]+(\nabla^\lc_{w^1}w^2)-(\nabla^\lc_{w^2}w^1)}}
\\
&= (u^1u^2 u)_\mu \cdot \mu \paren{\omega \Pi_\mu \tparen{T^{\nabla^\lc}(w^1,w^2)}}
\\
&=0
\end{align*}
since~$\nabla^\lc$ is torsion-free.
\end{proof}

\subsubsection{Second fundamental form of the gradient-type subbundle}
By analogy with the case of Euclidean submanifolds, we define the \emph{second fundamental form} of~$\VC{\infty}{b,\grad}$ in~$\VC{\infty}{b}$ as the bilinear form
\[
\SFF(V,Z) \eqdef \Pi^\tperp \conn^\lc_V Z\comma \qquad V,Z\in\VC{\infty}{b,\grad} \comma
\]
where~$\Pi^\perp_\mu\eqdef \Id_\mu-\Pi_\mu$ and~$\Pi_\mu$ is the projection operator~\eqref{eq:Projection}.

\begin{remark}\label{r:Heur2}
We stress that~$\mu\mapsto \SFF(V,Z)$ is not, in general, an element of~$\VC{\infty}{b}$, due to the discontinuity of~$\Pi^\perp_\mu$ and to the fact that~$\Pi^\perp_\mu$ does not preserve smoothness unless~$\mu\in\msP^\infty$.
We only have~$\SFF(V,Z)_\mu\in\ker\div_\mu$ pointwise on~$\msP$.
In the following, we will therefore interpret~$\SFF$ pointwise.
This interpretation is sufficient for our purposes since:
\begin{itemize}
\item the metric tensor~$\mbfG^\der_\mu$ naturally extends from~$\mfX^\infty$ to~$\mfX_\mu$; 
\item the metric tensor~$\mbfG^\grad_\mu$ naturally extends from~$\mfX^\infty_\grad$ to its~$\mfX_\mu$-closure~$T^\grad_\mu\msP$;
\item the value at~$\mu$ of every connection~$\conn_VZ$ involved in the computations depends only on the value of~$V$ at~$\mu$ rather than on its values in a neighborhood of~$\mu$.
\end{itemize}
\end{remark}

\begin{lemma}[Properties of~$\SFF$]
The second fundamental form is an $\FC{\infty}{b}$-bilinear form, satisfying
\begin{equation}\label{eq:l:SFF:0}
\SFF(V,Z)= \Pi^\tperp \tparen{(\boldOmega^\lc Z)V} \comma \qquad V,Z\in\VC{\infty}{b,\grad} \comma
\end{equation}
and its antisymmetric part is the non-integrability of~$\VC{\infty}{b,\grad}$ in~$\VC{\infty}{b}$, viz.
\begin{equation}\label{eq:l:SFF:0.1}
\mbfA(V,Z)\eqdef\SFF(V,Z)-\SFF(Z,V) = \Pi^\tperp \bbracket{V}{Z}= \Pi^\tperp \pbracket{V}{Z} \comma \qquad V,Z\in\VC{\infty}{b,\grad}\fstop
\end{equation}

\begin{proof}
Throughout, let~$V,Z\in\VC{\infty}{b,\grad}$ be arbitrary gradient-type vector fields.
The $\FC{\infty}{b}$-linearity of~$\SFF$ in the first entry follows directly from that of~$\conn^\lc$ in the first entry.
In the second entry, for every~$v\in \FC{\infty}{b}$, we have~$\conn^\lc_V(vZ) = v \conn^\lc_VZ+ (Vv)Z$.
Since~$Z\in\VC{\infty}{b,\grad}$, we also have~$\Pi^\tperp Z=0$ pointwise, and therefore~$\SFF(V,vZ)=\Pi^\tperp \conn^\lc_V(vZ) = v \Pi^\tperp \conn^\lc_VZ= v\SFF(V,Z)$.

Note that~$\conn^\intr_VZ\in\VC{\infty}{b,\grad}$ by~\eqref{eq:r:IntrConnectionModKerDiv}, so that~$\Pi^\tperp\conn^\intr_VZ\equiv 0$.
Thus, Equation~\eqref{eq:l:SFF:0} follows by definition~\eqref{eq:TrueConnection} of~$\conn^\lc=\conn^\intr+\boldOmega^\lc$.

Since~$\conn^\lc$ is torsion-free by Theorem~\ref{t:LCConnection}\ref{i:t:LCConnection:1}, we have~$\boldnabla^\lc_VZ-\boldnabla^\lc_ZV=\bbracket{V}{Z}$.
Applying~$\Pi^\tperp$ on both sides, and replacing the intrinsic torsion with the right-hand side of~\eqref{eq:p:Torsion:0}, we get
\[
\Pi^\tperp\bbracket{V}{Z}= \Pi^\tperp \conn^\intr_VZ - \Pi^\tperp \conn^\intr_ZV + \Pi^\tperp \pbracket{V}{Z} = \Pi^\tperp \pbracket{V}{Z} \comma
\]
again since~$\Pi^\tperp\conn^\intr_VZ\equiv 0$.
\end{proof}
\end{lemma}

\subsubsection{Computation of the `reduced' Riemann tensor}
Let us now compute the Riemann tensor~$\mbfR^\grad$ of~$\VC{\infty}{b,\grad}$.
Again, our terminology `Riemann tensor' ought to be understood heuristically, in the same sense as in~\cite{Lot07}.

\begin{theorem}
For all~$V^i\in\VC{\infty}{b,\grad}$ with~$i=1,\dotsc,4$, we have
\begin{align}\label{eq:t:RiemannGrad:0}
\mbfG^\grad\tparen{\Pi \tparen{\mbfR^\der(V^1,V^2)V^3},V^4} & = V^1\mbfG^\grad \tparen{\conn^\grad_{V^2}V^3, V^4} - \mbfG^\grad\tparen{\conn^\grad_{V^2}V^3, \conn^\grad_{V^1}V^4}
\\
\nonumber
&\qquad - V^2\mbfG^\grad\tparen{\conn^\grad_{V^1}V^3, V^4} + \mbfG^\grad\tparen{\conn^\grad_{V^1}V^3, \conn^\grad_{V^2}V^4}
\\
\nonumber
&\qquad -\mbfG^\grad\tparen{\conn^\grad_{\Pi\bbracket{V^1}{V^2}}V^3,V^4}
\\
\nonumber
&\qquad - \mbfG^\der\tparen{\SFF(V^2,V^3),\SFF(V^1,V^4)} + \mbfG^\der\tparen{\SFF(V^1,V^3), \SFF(V^2,V^4)} 
\\
\nonumber
&\qquad -\mbfG^\grad\tparen{(\boldOmega^\lc V^3)\, \mbfA(V^1,V^2),V^4}
\fstop 
\end{align}

In particular, the Riemann tensor~$\mbfR^\grad$ satisfies
\begin{align*}
\mbfR^\grad(V^1,V^2)V^3 &= \Pi\tparen{\mbfR^\der(V^1,V^2)V^3} 
\\
&\qquad + \mbfG^{\der,\sharp}\paren{\mbfG^\der\tparen{\SFF(V^2,V^3),\SFF(V^1,\emparg)} - \mbfG^\der\tparen{\SFF(V^1,V^3), \SFF(V^2,\emparg)} }
\\
&\qquad + (\boldOmega^\lc V^3)\, \mbfA(V^1,V^2) \fstop
\end{align*}

\begin{proof}
Note that, since~$\mbfR^\der$ satisfies all formal identities of a Riemann tensor, we have, for every~$V^1,\dotsc, V^4\in\VC{\infty}{b}$,
\begin{align*}
\mbfG^\der\tparen{\mbfR^\der(V^1,V^2)V^3,V^4} &= V^1\mbfG^\der (\conn^\lc_{V^2}V^3,V^4) - \mbfG^\der(\conn^\lc_{V^2}V^3,\conn^\lc_{V^1}V^4)
\\
&\qquad - V^2\mbfG^\der(\conn^\lc_{V^1}V^3, V^4) + \mbfG^\der(\conn^\lc_{V^1}V^3, \conn^\lc_{V^2}V^4)
\\
&\qquad - \mbfG^\der(\conn^\lc_{\bbracket{V^1}{V^2}}V^3,V^4) \fstop
\end{align*}
Specializing the expression above to~$V^1,\dotsc, V^4\in\VC{\infty}{b,\grad}$, and recalling that~$\Pi V=V$ for every~$V\in\VC{\infty}{b,\grad}$, we have
\begin{align*}
\mbfG^\der&\tparen{\mbfR^\der(V^1,V^2)V^3, \Pi V^4}
\\
& = V^1\mbfG^\der (\conn^\lc_{V^2}V^3,\Pi V^4)
\\
&\qquad - \mbfG^\der\tparen{\Pi (\conn^\lc_{V^2}V^3),\Pi (\conn^\lc_{V^1}V^4)} - \mbfG^\der\tparen{\Pi^\tperp (\conn^\lc_{V^2}V^3),\Pi^\tperp (\conn^\lc_{V^1}V^4)}
\\
&\qquad - V^2\mbfG^\der(\conn^\lc_{V^1}V^3, \Pi V^4)
\\
&\qquad + \mbfG^\der\tparen{\Pi(\conn^\lc_{V^1}V^3), \Pi(\conn^\lc_{V^2}V^4)} +  \mbfG^\der\tparen{\Pi^\tperp(\conn^\lc_{V^1}V^3), \Pi^\tperp(\conn^\lc_{V^2}V^4)}
\\
&\qquad - \mbfG^\der(\conn^\lc_{\bbracket{V^1}{V^2}}V^3, V^4) \fstop
\end{align*}
Using that~$\Pi$ is an orthogonal projection,
\begin{align*}
\mbfG^\grad&\tparen{\Pi \tparen{\mbfR^\der(V^1,V^2)V^3},V^4}
\\
& = V^1\mbfG^\grad \tparen{\Pi(\conn^\lc_{V^2}V^3), V^4}
\\
&\qquad - \mbfG^\grad\tparen{\Pi (\conn^\lc_{V^2}V^3),\Pi (\conn^\lc_{V^1}V^4)} - \mbfG^\der\tparen{\Pi^\tperp (\conn^\lc_{V^2}V^3),\Pi^\tperp (\conn^\lc_{V^1}V^4)}
\\
&\qquad - V^2\mbfG^\grad\tparen{\Pi(\conn^\lc_{V^1}V^3), V^4}
\\
&\qquad + \mbfG^\grad\tparen{\Pi(\conn^\lc_{V^1}V^3), \Pi(\conn^\lc_{V^2}V^4)} +  \mbfG^\der\tparen{\Pi^\tperp(\conn^\lc_{V^1}V^3), \Pi^\tperp(\conn^\lc_{V^2}V^4)}
\\
&\qquad - \mbfG^\der(\conn^\lc_{\bbracket{V^1}{V^2}}V^3, V^4) \comma
\end{align*}
whence, applying~\eqref{eq:TrueConnection} (with~$\nabla=\nabla^\lc$),~\eqref{eq:r:IntrConnectionModKerDiv}, and again that~$\Pi V=V$ for every~$V\in\VC{\infty}{b,\grad}$,
\begin{align*}
\mbfG^\grad&\tparen{\Pi \tparen{\mbfR^\der(V^1,V^2)V^3},V^4}
\\
& = V^1\mbfG^\grad \tparen{\Pi(\conn^\lc_{V^2}V^3), V^4}
\\
&\qquad - \mbfG^\grad\tparen{\Pi (\conn^\lc_{V^2}V^3),\Pi (\conn^\lc_{V^1}V^4)}
\\
&\qquad - \mbfG^\der\tparen{\SFF(V^2,V^3),\SFF(V^1,V^4)}
\\
&\qquad - V^2\mbfG^\grad\tparen{\conn^\intr_{V^1}V^3 + \Pi\tparen{(\boldOmega^\lc V^3)V^1}, V^4}
\\
&\qquad + \mbfG^\grad\tparen{\Pi(\conn^\lc_{V^1}V^3), \Pi(\conn^\lc_{V^2}V^4)}
\\
&\qquad + \mbfG^\der\tparen{\SFF(V^1,V^3), \SFF(V^2,V^4)}
\\
&\qquad - \mbfG^\der(\conn^\lc_{\bbracket{V^1}{V^2}}V^3, V^4)
\\
& = V^1\mbfG^\grad \tparen{\conn^\grad_{V^2}V^3, V^4} - \mbfG^\grad\tparen{\conn^\grad_{V^2}V^3, \conn^\grad_{V^1}V^4}
\\
&\qquad - V^2\mbfG^\grad\tparen{\conn^\grad_{V^1}V^3, V^4} + \mbfG^\grad\tparen{\conn^\grad_{V^1}V^3, \conn^\grad_{V^2}V^4}
\\
&\qquad - \mbfG^\der\tparen{\SFF(V^2,V^3),\SFF(V^1,V^4)} + \mbfG^\der\tparen{\SFF(V^1,V^3), \SFF(V^2,V^4)}
\\
&\qquad -\mbfG^\grad\tparen{\Pi\conn^\lc_{\bbracket{V^1}{V^2}}V^3,V^4} \fstop
\end{align*}

Let us proceed and compute the last term,~$\mbfG^\grad\tparen{\Pi\conn^\lc_{\bbracket{V^1}{V^2}}V^3,V^4}$.
We have
\begin{align*}
\mbfG^\der(\conn^\lc_{\bbracket{V^1}{V^2}}V^3, V^4) &= \mbfG^\der(\conn^\lc_{\Pi\bbracket{V^1}{V^2}}V^3, V^4) + \mbfG^\der(\conn^\lc_{\Pi^\tperp\bbracket{V^1}{V^2}}V^3, V^4)
\\
&= \mbfG^\grad\tparen{\Pi(\conn^\lc_{\Pi\bbracket{V^1}{V^2}}V^3), V^4} + \mbfG^\der\tparen{\Pi(\conn^\lc_{\Pi^\tperp\pbracket{V^1}{V^2}}V^3), V^4}
\end{align*}
by~\eqref{eq:l:SFF:0.1}.
Since~$\Pi^\tperp \pbracket{V^1}{V^2}$ is purely $T_\mu^\grad\msP$-orthogonal, we have~$\conn^\intr_{\Pi^\tperp\pbracket{V^1}{V^2}} V^3\equiv 0$ and thus
\[
\mbfG^\der\tparen{\Pi(\conn^\lc_{\Pi^\tperp\pbracket{V^1}{V^2}}V^3), V^4}= \mbfG^\der\tparen{\Pi\tparen{(\boldOmega^\lc V^3)\, \Pi^\tperp\!\pbracket{V^1}{V^2}},V^4} \fstop
\]
Combining the above equalities, we finally conclude that
\begin{align*}
\mbfG^\grad\tparen{\Pi \tparen{\mbfR^\der(V^1,V^2)V^3},V^4} & = V^1\mbfG^\grad \tparen{\conn^\grad_{V^2}V^3, V^4} - \mbfG^\grad\tparen{\conn^\grad_{V^2}V^3, \conn^\grad_{V^1}V^4}
\\
&\qquad - V^2\mbfG^\grad\tparen{\conn^\grad_{V^1}V^3, V^4} + \mbfG^\grad\tparen{\conn^\grad_{V^1}V^3, \conn^\grad_{V^2}V^4}
\\
&\qquad -\mbfG^\grad\tparen{\conn^\grad_{\Pi\bbracket{V^1}{V^2}}V^3,V^4}
\\
&\qquad - \mbfG^\der\tparen{\SFF(V^2,V^3),\SFF(V^1,V^4)} + \mbfG^\der\tparen{\SFF(V^1,V^3), \SFF(V^2,V^4)} 
\\
&\qquad -\mbfG^\grad\tparen{(\boldOmega^\lc V^3)\, \mbfA(V^1,V^2),V^4}
\comma
\end{align*}
which is precisely~\eqref{eq:t:RiemannGrad:0}.
\end{proof}
\end{theorem}

\begin{remark}[Comparison with~\cite{Lot07}]
Our definition of the `reduced' Riemann tensor~$\mbfR^\grad$ on~$\VC{\infty}{b,\grad}$ extends that of~\cite{Lot07}, i.e.\ the former coincides with the latter when restricted to constant vector fields of gradient type, at least at every point~$\mu=\rho\dvol_g$ with a $\Cinfty$ density~$\rho>0$.
\begin{proof}
Note that, for every~$V^i\in\VC{\infty}{b,\grad}$, with~$i=1,\dotsc,4$,
\begin{align}
\label{eq:r:Lott3:01}
\mbfG^\der\tparen{\SFF(V^2,V^3),\SFF(V^1,V^4)} &= g\tparen{\Pi^\tperp_{\emparg} \nabla^\lc_{V^2_{\emparg}}V^3_{\emparg}, \Pi^\tperp_{\emparg}\nabla^\lc_{V^1_{\emparg}}V^4_{\emparg}}^\trid \comma
\\
\label{eq:r:Lott3:02}
\mbfG^\der\tparen{\SFF(V^1,V^3), \SFF(V^2,V^4)} &= g\tparen{\Pi^\tperp_{\emparg} \nabla^\lc_{V^1_{\emparg}}V^3_{\emparg}, \Pi^\tperp_{\emparg}\nabla^\lc_{V^2_{\emparg}}V^4_{\emparg}}^\trid \comma
\\
\label{eq:r:Lott3:03}
\mbfG^\grad\tparen{(\boldOmega^\lc V^3)\, \mbfA(V^1,V^2),V^4} &= g\tparen{ \nabla^\lc_{\Pi^\perp_\emparg [V^1_\emparg,V^2_\emparg]} V^3_\emparg, V^4_\emparg}^\trid \fstop
\end{align}

Let us first address the terms in~\eqref{eq:r:Lott3:01}-\eqref{eq:r:Lott3:02}.
For~$i=1,\dotsc,4$, let~$\phi_i\in\Cinfty$, and choose~$\mu=\rho\dvol_g$ with~$\rho\in\Cinfty$ and~$\rho>0$.
Recall from~\cite[Eqn.~(5.1)]{Lot07} the definition of the operator
\begin{equation}\label{eq:r:Lott3:04}
T_{\phi\phi'} \eqdef (1-\Pi_\rho) \tparen{\nabla^i\phi\nabla_i\nabla_j \phi' \diff x^j} \comma
\end{equation}
where~$\Pi_\rho$ is the $L^2_\mu$-orthogonal projection operator onto (the closure of)~$\im(\diff)$, the image of the exterior differential~$\diff$, and thus it acts on $1$-forms; cf.~\cite[p.~430, after Lem.~3]{Lot07}.
(The left-hand side in~\eqref{eq:r:Lott3:04} depends on~$\rho$, but we suppress this dependence for simplicity of notation.)
Letting~$V^i\eqdef V_{\phi_i}$ and~$w^i\eqdef \nabla\phi_i$, with~$i=1,\dotsc,4$, be as in~\cite{Lot07}, we need to show that
\begin{equation}\label{eq:r:Lott3:1}
T_{\phi_1\phi_2} = g^\flat\tparen{\Pi^\tperp_\mu \nabla^\lc_{w^1}w^2}\comma
\end{equation}
where~$g^\flat\eqdef (g^\sharp)^{-1}\colon \mfX^\infty\to \Omega^\infty_1$ is the standard dual musical isomorphism.
By definition of~$1-\Pi_\rho$, we are left with showing that~$g^\flat(\nabla^\lc_{w^1}w^2) = \nabla^i\phi_1\nabla_i\nabla_j \phi_2 \diff x^j$ which, in fact, we have already shown in the proof of Remark~\ref{r:RemarkLott2}.
The equality in~\eqref{eq:r:Lott3:1} settles the last two terms in~\cite[Eqn.~(5.3)]{Lot07}.

Let us now address the term in~\eqref{eq:r:Lott3:03}.
Since~$V^1,V^2$ are constant vector fields, we have
\[
\Pi^\tperp_\mu [w^1, w^2] = \Pi^\tperp_\mu\tparen{ \nabla^\lc_{w^1} w^2 - \nabla^\lc_{w^2} w^1} = g^\sharp\tparen{T_{\phi_2\phi_1}-T_{\phi_1\phi_2}}
\]
and therefore, since~$T_{\phi\phi'}$ is antisymmetric by~\cite[Lem.~6, p.~432]{Lot07},
\[
\Pi^\tperp_\mu [w^1, w^2] = -2\, g^\sharp(T_{\phi_1\phi_2}) \fstop
\]
Note that~$Z\eqdef g^\sharp(T_{\phi_1\phi_2})\in \ker\div_\mu$, since~$T_{\phi\phi'}\in \im(\diff)^\tperp$.

Now, set~$h\eqdef g(w^3,w^4)\in\Cinfty$. By metric compatibility of~$\nabla^\lc$,
\begin{align}\label{eq:r:Lott3:11}
Zh = Z g(w^3,w^4) = g(\nabla^\lc_Z w^3,w^4)+ g(w^3,\nabla^\lc_Zw^4) \fstop
\end{align}
For the last term, we have, by symmetry of~$\Hess_g(\phi_4)$,
\begin{align}\label{eq:r:Lott3:12}
g(w^3,\nabla^\lc_Zw^4) = \Hess_g(\phi_4) (Z,w^3) = \Hess_g(\phi_4) (w^3,Z) = g(\nabla^\lc_{w^3}w^4,Z)\fstop
\end{align}
Combining~\eqref{eq:r:Lott3:11} and~\eqref{eq:r:Lott3:12} we thus obtain
\[
g(Z,\nabla h)= Zh= g(\nabla^\lc_Z w^3,w^4)+g(Z,\nabla^\lc_{w^3}w^4) \fstop
\]
Integrating w.r.t.~$\mu$, and since~$Z\in \ker\div_\mu$, we conclude that the integral of the left-hand side above vanishes, and therefore,
\[
\int g(\nabla^\lc_Z w^3,w^4)\diff \mu = - \int g(Z,\nabla^\lc_{w^3}w^4)\diff\mu = -\int g(T_{\phi_1\phi_2} T_{\phi_3\phi_4}) \diff\mu \fstop
\]
Substituting the definition of~$Z$ then shows that the term in~\eqref{eq:r:Lott3:03} satisfies
\[
\mbfG^\grad\tparen{(\boldOmega^\lc V^3)\, \mbfA(V^1,V^2),V^4} = -2\,\mbfG^\der (T_{\phi_1\phi_2}, T_{\phi_3\phi_4}) \fstop
\]
Replacing all terms in~\eqref{eq:t:RiemannGrad:0} finally proves the assertion.
\end{proof}
\end{remark}

\end{document}